\declaretheorem[numbered=no,
name=Theorem A]{theorem A}
\declaretheorem[numbered=no,
name=Theorem B]{theorem B}
\declaretheorem[numbered=no,
name=Theorem C]{theorem C}
\declaretheorem[numbered=no,
name=Theorem D]{theorem D}
\declaretheorem[numbered=no,
name=Theorem E]{theorem E}
\numberwithin{equation}{section}
\DeclareMathOperator{\RE}{Re}
\theoremstyle{plain}
\newtheorem{theorem}{Theorem}[section]
\newtheorem{corollary}[theorem]{Corollary}
\theoremstyle{definition}
\newtheorem{definition}[theorem]{Definition}
\theoremstyle{remark}
\newtheorem{remark}{Remark}[section]
\begin{document}
\title{Toeplitz Determinants in One and Higher Dimensions}
\author{Surya Giri and S. Sivaprasad Kumar}


\date{}


	

\maketitle	
	
\begin{abstract}
    In this study, we derive the sharp bounds of certain Toeplitz determinants whose entries are the coefficients of holomorphic functions belonging to  a class defined on the unit disk $\mathbb{U}$. Further, these results are extended to a class of holomorphic functions on the unit ball in a complex Banach space and on the unit polydisc in $\mathbb{C}^n$. The obtained results provide the bounds of Toeplitz determinants for various subclasses of normalized univalent functions in higher dimensions.
\end{abstract}

\vspace{0.5cm}
	\noindent \textit{Keywords:} Holomorphic mappings; Starlike mappings; Toeplitz determinants; Coefficient inequalities.\\
	\\
	\noindent \textit{AMS Subject Classification:} 32H02, 30C45.
\maketitle

\section{Introduction}
        Coefficient problems play a significant role in the growth of Geometric function theory. In fact, Bieberbach conjecture, which deals with the bounds of the coefficients of normalized  analytic univalent functions defined on the unit disk, took 68 years to prove. While solving the conjecture, various new concepts were  developed. Koebe one-quarter theorem illustrates an application of the coefficient problem, as it is proved using the second coefficient bound. However, Cartan \cite{Cart} stated that Bieberbach conjecture does not hold in case of several complex variables. There are counterexamples, which show that many results in the Geometric function theory of one complex variable are not applicable for several complex variables (see \cite{Gong}). We use the following notations for functions of one complex variable,

      Let $\mathcal{S}$ be the class of analytic univalent functions in  the unit disk $\mathbb{U}= \{ z \in \mathbb{C}: \vert z \vert <1 \}$, having the series expansion of the form $g(z) = z + \sum_{n=2}^\infty b_n z^n$. Let $\mathcal{S}^*$, $\mathcal{S}^*(\alpha)$ and $\mathcal{SS}^*(\gamma)$ respectively denote the subclasses of $\mathcal{S}$, which contain starlike functions, starlike functions of order $\alpha$ $(0 \leq \alpha <1)$ and strongly starlike functions of order $\gamma$ $(0 < \gamma \leq 1)$. For more details about these classes, we refer \cite{GraKoh}.

      For the class $\mathcal{S}$ and its subclasses, various coefficient problems are studied. In particular, Ali et al. \cite{Ali} obtained the sharp bounds of Toeplitz determinants, when the entries of the Toeplitz matrix are the coefficients of function $g \in \mathcal{S}$ and certain  of its subclasses.

   For the coefficients $\left\{b_k\right\}_{k\geq 2}$ of the function $g(z) = z + \sum_{n=2}^\infty b_n z^n $, the Toeplitz matrix  is defined by
\begin{equation*}
     T_{m,n}(g)= \begin{bmatrix}
	b_n & b_{n+1} & \cdots & b_{n+m-1} \\
	b_{n+1} & b_n & \cdots & b_{n+m-2}\\
	\vdots & \vdots & \vdots & \vdots\\
    b_{n+m-1} & b_{n+m-2} & \cdots & b_n\\
	\end{bmatrix}.
\end{equation*}
    Thus, the second order Toeplitz determinant is
\begin{equation}\label{intilb}
    \det{T_{2,2}(g)}=  b_2^2  -  b_3^2
\end{equation}
   and the third order Toeplitz determinant is given by
\begin{equation}\label{T31}
    \det T_{3,1}(g)=
     \begin{vmatrix}
     1 & b_{2} & b_{3} \\
	 b_{2} & 1 & b_{2}\\
	 b_3 & b_{2} & 1\\
     \end{vmatrix}
      = 2 b_2^{2} b_3 - 2 b_2^2-  b_3^2+1.
\end{equation}
    Ye and Lim \cite{LHLIM} showed that any $n \times n$ matrix over $\mathbb{C}$ generically can be written as the product of some Toeplitz matrices or Hankel matrices. Toeplitz matrices and Toeplitz determinants have numerous applications in the field of pure as well as applied mathematics. They arise in partial differential equations, algebra, signal processing and time series analysis. For more applications of Toeplitz matrices and Toeplitz determinants, we refer \cite{LHLIM} and the references cited therein.

  In 2017, Ali et al. \cite{Ali} determined the bound of $\vert \det T_{2,2}(g) \vert$ and $\vert \det T_{3,1}(g)\vert$, when entries of $T_{m,n}(g)$ are the coefficients of starlike functions. Recently, Ahuja et al. \cite{Ahuja} obtained the bounds for various subclasses of starlike functions.
\begin{theorem A}\label{thmA}\cite{Ali}
   If $g \in \mathcal{S}^*$, then the following sharp bounds hold:
   $$ \vert \det T_{2,2}(g) \vert \leq 13 \;\;  \text{and} \;\; \vert \det T_{3,1}(g) \vert \leq  24.$$
\end{theorem A}
\begin{theorem B}\label{thmB}\cite{Ahuja}
   If $ g \in \mathcal{S}^*(\alpha)$, then
   $$ \vert \det T_{2,2}(g) \vert \leq (1 - \alpha)^2 ( 4 \alpha^2 - 12 \alpha + 13). $$
   For $\alpha \in [0,2/3]$, the following inequality holds:
   $$ \vert \det T_{3,1}(g) \vert \leq 12 \alpha^4 -52 \alpha^3 + 91 \alpha^2 -74 \alpha + 24.$$
   All these estimations are sharp.
\end{theorem B}
\begin{theorem C}\cite{Ahuja} Let $g \in \mathcal{S}^*[D,E]$, then the following sharp estimations hold:
\begin{enumerate}[(i)]
  \item If $\vert D - 2 E \vert \geq 1$, then
    $$ \vert \det T_{2,2}(g) \vert \leq \frac{(D -E )^2 ( D^2 + 4 E^2 - 4 D E + 4)}{4}.    $$
  \item  If $E \leq \min \{ (D -1)/2, (3 D -1)/2 \}$, then
    $$ \vert \det T_{3,1}(g) \vert \leq 1 + 2 ( D - E)^2 + (3 D^2 - 5 D E +  2 E^2) (D^2 - 3 D E + 2 E^2)/4.$$
\end{enumerate}
\end{theorem C}
    The class $\mathcal{S}^*[D,E]$ denotes the class of Janowski starlike functions \cite{Janow}, where $-1 \leq E < D \leq 1$.

    Coefficient problems are also studied in the case of several variables.  For instance, Xu and Liu \cite{XuLiu} solved the Fekete-Szeg\"{o} problem for a subclass of normalized starlike mappings on the unit ball of a complex Banach space. Xu et al. \cite{Xuetal} obtained the bound of the same for a subclass of normalized quasi-convex mappings of type $B$ on the unit ball of complex Banach space.
    Generalizing this work, Hamada et al. \cite{Ham,Xuetal} also determined the bound of Feketo-Szeg\"{o} type inequality.
     Contrary to Feketo-Szeg\"{o} inequality for various subclasses of $\mathcal{S}$, very few results are known for the inequalities of homogeneous expansions for subclasses of biholomorphic mappings in several complex variables \cite{Grah,Ham2,Ham3,Xu22}.
     Numerous best-possible results concerning the coefficient estimates for subclasses of holomorphic mappings in higher dimensions are obtained  in \cite{Bracc,Grah2, Kohr,Liu,SGIRI,XuLiu2}.

   In higher dimensions, the following notations are used throughout the paper. Let $X$ be a complex Banach space with respect to a norm $\| \cdot\|$ and $\mathbb{C}^n$ denote the space of $n$ complex variables $z=(z_1, z_2, \cdots z_n)'$. Also, let $\mathbb{B} = \{ z \in X: \| z \| < 1\}$ be the unit ball in $X$ and $\mathbb{U}^n$ be the Euclidean unit ball in $\mathbb{C}^n$. Further, the boundary and distinguished boundary of $\mathbb{U}^n$ is denoted by $\partial \mathbb{U}^n$ and $\partial_0 \mathbb{U}^n$ respectively.

    For each $z \in X \setminus\{0 \}$, consider the set
    $$ T_z = \{ l_z \in L(X,\mathbb{C}) : l_z(z) = \| z\|, \| l_z \| = 1\},$$
    where $L(X, Y)$ denotes the set of continuous linear operators from $X$ into a complex Banach space $Y$. Let $I$ denote the identity in $L(X, X)$. This set is non-empty according to the Hahn-Banach theorem.

    Let $\mathcal{H}(\Omega)$ denote the set of all holomorphic mappings from $\Omega$ into $X$. If $g \in \mathcal{H}(\mathbb{B})$, then for each $k =1, 2, \cdots$, there is a bounded symmetric linear mapping
    $$D^k g(z) : \prod_{j=1}^k X \rightarrow X, $$
    called the $k^{th}$ order Fr\'{e}chet derivative of $g$ at $z$ such that
    $$ g(w) = \sum_{k=0}^\infty \frac{1}{k!} D^k g(z) ((w -z)^k )$$
    for all $w$ in some neighborhood of $z$. It is understood that $$ D^0 g(z) ((w -z)^0 )= g(z)$$ and for $k \geq 1$,
    $$ D^k g(z)( (w -z)^k) = D^k g(z) \underbrace{( w-z, w-z, \cdots, w-z) }_\text{ k -times}.$$
    We say that $g$ is normalized if $g(0)=0$ and $D g(0) =I$.
    A holomorphic mapping $g : \Omega \rightarrow X$ is said to be biholomorphic on the domain $\Omega$ if $g(\Omega)$ is a domain in $X$ and the inverse $g^{-1}$ exists and is holomorphic on $g(\Omega)$. If for each $z \in \Omega$, $Dg(z)$ has a bounded inverse, the mapping $g$ is said to be locally biholomorphic.
     As in the finite dimensional case, let $\mathcal{S}(\mathbb{B})$ denote the set of normalized biholomorphic mappings from $\mathbb{B}$ into $X$. The class $\mathcal{S}(\mathbb{B})$ of normalized biholomorphic mappings is not a normal family on the unit ball in $\mathbb{C}^n$.

    On a bounded circular domain $\Omega \subset \mathbb{C}^n$, the first and the $m^{th}$ Fr\'{e}chet derivative of a holomorphic mapping $g : \Omega \rightarrow X$  are written by
    $ D g(z)$ and $D^m g(z) (a^{m-1},\cdot)$ respectively. The matrix representations are
\begin{align*}
    D g(z) &= \bigg(\frac{\partial g_j}{\partial z_k} \bigg)_{1 \leq j, k \leq n}, \\
    D^m g(z)(a^{m-1}, \cdot) &= \bigg( \sum_{p_1,p_2, \cdots, p_{m-1}=1}^n  \frac{ \partial^m g_j (z)}{\partial z_k \partial z_{p_1} \cdots \partial z_{p_{m-1}}} a_{p_1} \cdots a_{p_{m-1}}   \bigg)_{1 \leq j,k \leq n},
\end{align*}
   where $g(z) = (g_1(z), g_2(z), \cdots g_n(z))', a= (a_1, a_2, \cdots a_n)'\in \mathbb{C}^n.$
\\

    The following class was defined by Hamada et al. \cite{Ham4}.
\begin{definition}
   Let $g: \mathbb{B} \rightarrow X$ be a normalized locally biholomorphic mapping and $\alpha \in (0,1)$. We say that $g$ is starlike of order $\alpha$ if
   $$ \left\vert \frac{1}{ \| z\|} l_z ([D g(z)]^{-1} g(z)) - \frac{1}{2 \alpha} \right\vert < \frac{1}{ 2 \alpha}, \quad \forall z \in \mathbb{B}\setminus\{0\}, \; l_z \in T(z) . $$
   In case of $X = \mathbb{C}^n$ and $\mathbb{B}= \mathbb{U}^n$, the above condition is equivalent to
   $$ \left\vert \frac{q_k(z)}{z_k} - \frac{1}{2 \alpha} \right\vert < \frac{1}{2\alpha}, \quad \forall z \in \mathbb{U}^n \setminus \{0 \},$$
   where $$q(z) = ( q_1(z), q_2(z), \cdots , q_n(z))' = (D (g(z)))^{-1}g(z)$$
   is a column vector in $\mathbb{C}^n$ and $k$ satisfies $$\vert z_k \vert = \| z \| = \max_{1 \leq j \leq n} \{ \vert z_j \vert \}.$$
   For $\mathbb{B} = \mathbb{U}$ and $X = \mathbb{C}$,  the relation is equivalent to
   $$ \RE \frac{ z g'(z)}{g(z)} > \alpha, \quad z \in \mathbb{U}. $$
  Let $\mathcal{S}^*_\alpha (\mathbb{B})$ denote the class of starlike mappings of order $\alpha$ on $\mathbb{B}$. When $X = \mathbb{C}$ and $\mathbb{B} = \mathbb{U}$, the class $\mathcal{S}^*_{\alpha}(\mathbb{U})$ is denoted by $\mathcal{S}^*(\alpha).$
\end{definition}
\begin{definition}\cite{Kohr2}
   Let $g: \mathbb{B} \rightarrow X$ be a normalized locally biholomorphic mapping and $\gamma \in (0,1]$. We say that $f$ is strongly starlike mapping of order $\gamma$ if
   $$ \left\vert \arg l_z ([D g(z)]^{-1} g(z))  \right\vert < \frac{\pi}{ 2} \gamma, \quad \forall z \in \mathbb{B}\setminus\{0\}, \; l_z \in T(z) . $$
   In case of $\mathbb{B}= \mathbb{U}^n$ and $X = \mathbb{C}^n$, the above condition is equivalent to
   $$ \left\vert \arg \frac{q_j(z)}{z_j}  \right\vert < \frac{\pi}{ 2} \gamma, \quad z \in \mathbb{U}^n\setminus\{0\} . $$
   where $$q(z) = ( q_1(z), q_2(z), \cdots , q_n(z))' = (D (g(z)))^{-1}g(z)$$ is a column vector in $\mathbb{C}^n$ and $j$ satisfies $$\vert z_j \vert = \| z \| = \max_{1 \leq k \leq n} \{ \vert z_k \vert\}.$$
   In case of  $\mathbb{B} = \mathbb{U}$ and $X = \mathbb{C}$, the relation is equivalent to
   $$ \left\vert \arg \frac{ z g'( z)}{g( z )} \right\vert  < \frac{\pi}{2} \gamma, \quad z \in \mathbb{U}. $$
  Let $\mathcal{SS}^*_\gamma (\mathbb{B})$ denote the class of starlike mappings of order $\gamma$ on $\mathbb{B}$. When $X = \mathbb{C}$ and $\mathbb{B} = \mathbb{U}$, the class $\mathcal{SS}^*_{\gamma}(\mathbb{U})$ is denoted by $\mathcal{SS}^*(\gamma).$
\end{definition}
   For a biholomorphic function $\Phi : \mathbb{U} \rightarrow \mathbb{C}$, which satisfies $\Phi(0)=1$ and $\RE \Phi(z) >0$,  Kohr \cite{Kohr} introduced the class $\mathcal{M}_\Phi$ containing the functions $p \in \mathcal{H}(\mathbb{B})$ such that $D p(0) =I$ and $\| z\|/l_z( p(z)) \in \Phi (\mathbb{U}).$ Here, we additionally take $\Phi'(0) >0$, $\Phi''(0) \in \mathbb{R}$ and define the following:
\begin{definition}\label{defn1}
   Let $\Phi : \mathbb{U} \rightarrow \mathbb{C}$ be a biholomorphic function such that $\Phi(0) =1$, $\RE \Phi(z) > 0$, $\Phi'(0)>0$ and $\Phi''(0) \in \mathbb{R}$. We define $\mathcal{M}_\Phi$ to be the class of mappings given by
   $$\mathcal{M}_\Phi = \left\{ p \in \mathcal{H}(\mathbb{B}) : p(0) =0, D(p(0))=I, \frac{\| z\|}{l_z ( p(z) )} \in \Phi(\mathbb{U}), z\in \mathbb{B}\setminus \{ 0 \},\; l_z \in T(z)  \right\}.$$
   For $\mathbb{B} = \mathbb{U}^n$ and $X = \mathbb{C}^n$, the above relation is equivalent to
   $$\mathcal{M}_\Phi = \left\{ p \in \mathcal{H}(\mathbb{U}^n) : p(0) =0, D(p(0))=I, \frac{z_k}{ p_k(z) } \in \Phi(\mathbb{U}), z\in \mathbb{U}^n\setminus \{ 0 \}  \right\},$$
   where $p(z) = ( p_1(z), p_2(z), \cdots , p_n(z))' $ is a column vector in $\mathbb{C}^n$ and $k$ satisfies
   $$\vert z_k \vert = \| z \| = \max_{1 \leq j \leq n} \{ \vert z_j \vert\}.$$
   For $\mathbb{B} = \mathbb{U}$ and $X = \mathbb{C}$,  the relation is equivalent to
    $$\mathcal{M}_\Phi = \left\{ p \in \mathcal{H}(\mathbb{U}) : p(0) =0, p'(0)=1, \frac{z}{ p(z) } \in \Phi(\mathbb{U}), z \in \mathbb{U} \right\}.$$
\end{definition}
   Also, note that, if $g\in \mathcal{H}(\mathbb{B})$ and $D (g(z))^{-1} g(z) \in \mathcal{M}_\Phi$, then suitable choices of $\Phi$  in Definition \ref{defn1} provide different subclasses of holomorphic mappings. For instance, when $\Phi(z) = (1+z)/(1 -z)$, $\Phi(z) = (1 + (1-2 \alpha)z)/(1-z)$ and $\Phi(z) = ((1+z)/(1-z))^\gamma$, we easily obtain that  $g \in \mathcal{S}^*(\mathbb{B})$ , $g \in  \mathcal{S}^*_\alpha(\mathbb{B})$ and $g \in  \mathcal{SS}^*_\gamma(\mathbb{B})$ respectively.

    In this paper, we obtain the sharp bounds of $ \vert \det T_{2,2}(g) \vert$ and $\vert \det T_{3,1}(g) \vert$ for a class of holomorphic functions in the unit disk, which contain the above results as special cases. Further, these results are generalized in higher dimensions.
\section{Main Results}
    First, we find the bounds of  $ \vert \det T_{2,2}(g) \vert$ and $\vert \det T_{3,1}(g) \vert$ for a class of holomorphic mappings defined on $\mathbb{U}.$
\begin{theorem}\label{thm1}
    Let $g(z) = z + b_2 z^2 +b_3 z^3 + \cdots \in  \Phi(\mathbb{U})$, where $\Phi$ is same as given in Definition \ref{defn1} and satisfy
     $$ \vert \Phi''(0) + 2 (\Phi'(0))^2 \vert \geq 2 \Phi'(0) >0 .$$
     If ${g(z)}/{g'(z)} \in \mathcal{M}_\Phi$, then
    $$ \lvert T_{2,2}(g) \rvert \leq  \frac{(\Phi'(0))^2}{4}\left( \frac{1}{2} \frac{\Phi''(0)}{\Phi'(0)} +  \Phi'(0) \right)^2 + ( \Phi'(0))^2.$$
    The bound is sharp.
\end{theorem}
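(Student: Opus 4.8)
The plan is to convert the hypothesis into a subordination and then reduce the bound to a one–variable extremal problem. Since $p:=g/g'\in\mathcal{M}_\Phi$ with $p(0)=0$ and $p'(0)=1$, the defining condition $z/p(z)\in\Phi(\mathbb{U})$ reads $zg'(z)/g(z)\in\Phi(\mathbb{U})$; because $\Phi$ is univalent with $\Phi(0)=1$ and $zg'/g\to 1$ at the origin, this is equivalent to the subordination $zg'(z)/g(z)=\Phi(\omega(z))$ for some Schwarz function $\omega(z)=c_1z+c_2z^2+\cdots$ satisfying $|c_1|\le 1$ and $|c_2|\le 1-|c_1|^2$. Writing $\Phi(z)=1+\Phi'(0)z+\frac{1}{2}\Phi''(0)z^2+\cdots$, I would expand both sides of $zg'/g=\Phi(\omega)$ and match the first two nontrivial coefficients.

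A direct division gives $zg'(z)/g(z)=1+b_2z+(2b_3-b_2^2)z^2+\cdots$, so comparing with $\Phi(\omega)=1+\Phi'(0)c_1z+\big(\Phi'(0)c_2+\frac{1}{2}\Phi''(0)c_1^2\big)z^2+\cdots$ yields $b_2=\Phi'(0)c_1$ and, after simplification, $b_3=\frac{K}{4}c_1^2+\frac{\Phi'(0)}{2}c_2$, where I abbreviate $K:=\Phi''(0)+2(\Phi'(0))^2$, which is real by the standing hypotheses $\Phi'(0)>0$ and $\Phi''(0)\in\mathbb{R}$. This is exactly the quantity appearing in the hypothesis $|K|\ge 2\Phi'(0)$ and in the target, since $\frac{\Phi''(0)}{2\Phi'(0)}+\Phi'(0)=\frac{K}{2\Phi'(0)}$, so the asserted bound equals $\frac{K^2}{16}+(\Phi'(0))^2$.

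The core estimate is then $|\det T_{2,2}(g)|=|b_2^2-b_3^2|\le |b_2|^2+|b_3|^2$. Setting $x:=|c_1|^2\in[0,1]$, I would use $|b_2|^2=(\Phi'(0))^2x$ together with $|c_2|\le 1-x$ to get $|b_3|\le \frac{|K|}{4}x+\frac{\Phi'(0)}{2}(1-x)$, reducing the problem to maximizing $h(x):=(\Phi'(0))^2x+\big(\frac{|K|}{4}x+\frac{\Phi'(0)}{2}(1-x)\big)^2$ over $[0,1]$. Here the hypothesis $|K|\ge 2\Phi'(0)$ enters cleanly: it forces the slope $\frac{|K|}{4}-\frac{\Phi'(0)}{2}$ of the inner linear term to be nonnegative, so that bracket is a nonnegative nondecreasing function of $x$ and hence $h$ is increasing, attaining its maximum $h(1)=(\Phi'(0))^2+\frac{K^2}{16}$ at $x=1$, precisely the claimed bound.

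For sharpness I would exhibit the extremal directly by taking $\omega(z)=iz$, i.e. $zg'(z)/g(z)=\Phi(iz)$, which gives $c_1=i$, $c_2=0$, hence $b_2=i\Phi'(0)$ and $b_3=-K/4$, so that $b_2^2-b_3^2=-(\Phi'(0))^2-K^2/16$ and equality holds. I expect the main point to verify is that the triangle–inequality relaxation $|b_2^2-b_3^2|\le|b_2|^2+|b_3|^2$, which is a priori lossy, is actually attained: the resolution is that at the maximizing value $x=1$ one is forced to have $c_2=0$, whereupon $b_2^2=-(\Phi'(0))^2$ and $b_3^2=K^2/16$ are genuinely antiparallel and the coefficient bound $|c_2|\le 1-|c_1|^2$ holds with equality, so the chain of inequalities collapses to equalities and no sharpness is lost.
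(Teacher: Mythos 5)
Your proposal is correct, and its skeleton matches the paper's: both reduce the hypothesis to the subordination $zg'(z)/g(z)\prec\Phi$, both rest on the triangle-inequality relaxation $\lvert b_2^2-b_3^2\rvert\le\lvert b_2\rvert^2+\lvert b_3\rvert^2$, and both exhibit sharpness via the same extremal (the rotation $\omega(z)=iz$, i.e.\ $g_\Phi(z)=z\exp\int_0^z(\Phi(it)-1)t^{-1}\,dt$). Where you genuinely diverge is in how the coefficient bounds are obtained. The paper imports the Fekete--Szeg\H{o}-type inequality of Xu et al.\ (equation (\ref{FS})) as a black box, applies it with $\lambda=0$ to get $\lvert b_3\rvert\le |K|/4$ (in your notation $K=\Phi''(0)+2(\Phi'(0))^2$), separately bounds $\lvert b_2\rvert\le\Phi'(0)$, and adds the two maxima. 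You instead rederive everything from the Schwarz-function coefficients $b_2=\Phi'(0)c_1$, $b_3=\tfrac{K}{4}c_1^2+\tfrac{\Phi'(0)}{2}c_2$ and perform a joint one-parameter optimization in $x=\lvert c_1\rvert^2$, using $\lvert c_2\rvert\le 1-\lvert c_1\rvert^2$. This buys you a self-contained argument that does not cite (\ref{FS}), makes transparent exactly where the hypothesis $\lvert K\rvert\ge 2\Phi'(0)$ is used (monotonicity of $h$ on $[0,1]$), and confirms a priori that the two individual maxima are simultaneously attainable at $x=1$, $c_2=0$ --- a point the paper settles only a posteriori through the extremal function. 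The cost is that your route is tied to the one-dimensional Schwarz-lemma machinery, whereas the paper's reliance on the cited inequality is what lets it recycle the identical argument verbatim in Theorems \ref{thmB1} and \ref{ThmUn1} on the unit ball and polydisc, where (\ref{FSB}) and (\ref{FSUn}) play the role of (\ref{FS}). One cosmetic caveat: your final sentence about the inequality chain ``collapsing to equalities'' at $x=1$ is really only needed as a verification that the exhibited extremal attains the bound (since $b_2^2=-(\Phi'(0))^2<0<K^2/16=b_3^2$ are antiparallel there), not as part of the upper-bound argument itself; as written it is correct but slightly conflates the two directions.
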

\begin{proof}
   Since $g(z)/g'(z) \in \mathcal{M}_\Phi$, therefore we have
   $$ G(z) :=\frac{z g'(z)}{g(z)} \in \Phi(\mathbb{U}),$$
   which yields $G \prec \Phi$. For $g(z) = z + b_2 z^2 +b_3 z^3 + \cdots $, the Taylor series expansion of $G(z)$ is given by
   $$ G(z) = 1 + b_2 z + (2 b_3 - b_2^2) z^2 + \cdots.$$
    Xu et al. \cite{Xu} proved that
\begin{equation}\label{FS}
    \vert b_3 - \lambda b_2^2 \vert \leq \frac{\vert \Phi'(0) \vert}{2} \max \left\{ 1,  \left\lvert \frac{1}{2} \frac{\Phi''(0)}{\Phi'(0)} + (1 - 2 \lambda)  \Phi'(0) \right\rvert \right\} , \quad \lambda \in \mathbb{C}.
\end{equation}
    Thus, whenever $  \vert \Phi''(0) + 2 (\Phi'(0))^2 \vert  \geq 2  \Phi'(0) $, the equation (\ref{FS}) yields
\begin{equation}\label{a3}
   \lvert b_3 \rvert \leq \frac{\Phi'(0)}{2}  \left\vert \frac{1}{2} \frac{\Phi''(0)}{\Phi'(0)} +  \Phi'(0) \right\vert.
\end{equation}
  Further, using the bound $\lvert G'(0) \rvert \leq  \Phi'(0) $, we obtain
\begin{equation}\label{a2}
    \vert b_2 \vert \leq  \Phi'(0) .
\end{equation}
   From (\ref{intilb}), we have
\begin{align*}
    \vert \det T_{2,2}(g) \vert  &= \lvert b_3^2 - b_2^2 \rvert \leq \lvert b_3 \rvert^2 + \lvert b_2 \rvert^2 .
\end{align*}
    Clearly, the required bound follows directly from the above relation together with the bounds of  $\vert b_3 \vert$ and $\vert b_2\vert$ given in (\ref{a3}) and (\ref{a2}) respectively.

    To see the sharpness of the bound, consider the function $ g_\Phi : \mathbb{U} \rightarrow \mathbb{C}$ given by
\begin{equation}\label{tildf}
    g_\Phi(z) = z \exp \int_0^z \frac{(\Phi(i t)-1)}{t}dt = 1 + i \Phi'(0) z- \frac{1}{2} \bigg( (\Phi'(0))^2  + \frac{\Phi''(0)}{2} \bigg) z^2 + \cdots.
\end{equation}
   It can be easily seen that $g_\Phi(z)/g_\Phi'(z)  \in \mathcal{M}_\Phi $ and
  $$ \vert \det T_{2,2}(g_\Phi) \vert  =  \frac{1}{4} \bigg( (\Phi'(0))^2  + \frac{\Phi''(0)}{2} \bigg)^2 +  (\Phi'(0))^2,$$
  which shows that the bound is sharp and completes the proof.
\end{proof}
\begin{theorem}\label{thm2}
   Let $g(z) = z + b_2 z^2 +b_3 z^3 + \cdots \in \Phi(\mathbb{U})$, where $\Phi$ is same as given in Definition \ref{defn1} and satisfy
      $$ 2 \Phi'(0) - 2 (\Phi'(0))^2 \leq {\Phi''(0)} \leq 6 (\Phi'(0))^2 - 2 \Phi'(0)   . $$
      If ${g (z)}/{g'(z)} \in \mathcal{M}_\Phi$, then
    $$  \vert \det T_{3,1}(g) \vert \leq 1 + 2 ( \Phi'(0) )^2 + \frac{ (\Phi'(0)) ^2}{4} \bigg(  3 \Phi'(0) - \frac{\Phi''(0)}{2 \Phi'(0)} \bigg)  \bigg( \frac{\Phi''(0)}{2 \Phi'(0)} + \Phi'(0) \bigg).$$
    The bound is sharp.
\end{theorem}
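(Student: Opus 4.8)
The plan is to follow the template of the proof of Theorem \ref{thm1}: from $g(z)/g'(z)\in\mathcal{M}_\Phi$ we again pass to $G(z)=zg'(z)/g(z)\prec\Phi$, with $G(z)=1+b_2 z+(2b_3-b_2^2)z^2+\cdots$, and feed the Fekete--Szeg\"o inequality (\ref{FS}) into the third-order determinant. The decisive preliminary move is to rewrite (\ref{T31}) in factored form,
\begin{equation*}
    \det T_{3,1}(g)=1-2b_2^2-b_3^2+2b_2^2 b_3=(1-b_3)\,(1+b_3-2b_2^2),
\end{equation*}
which reduces the estimate to bounding the two factors $|1-b_3|$ and $|1+b_3-2b_2^2|$ separately. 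Throughout I write $a=\Phi'(0)$ and $c=\Phi''(0)$ for brevity.

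For the first factor I would apply (\ref{FS}) with $\lambda=0$. Setting $M:=\tfrac{a}{2}\big(\tfrac{c}{2a}+a\big)=\tfrac{c}{4}+\tfrac{a^2}{2}$, the left half of the hypothesis, $c\ge 2a-2a^2$, is exactly the condition $|c+2a^2|\ge 2a$ that forces the maximum in (\ref{FS}) onto its nontrivial branch; this gives $|b_3|\le M$ (precisely (\ref{a3})) and hence $|1-b_3|\le 1+M$. For the second factor I would apply (\ref{FS}) with $\lambda=2$ and set $N:=\tfrac{a}{2}\big(3a-\tfrac{c}{2a}\big)=\tfrac{3a^2}{2}-\tfrac{c}{4}$. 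Here the right half of the hypothesis, $c\le 6a^2-2a$, is equivalent to $N\ge a/2$, i.e.\ $\big|\tfrac{c}{2a}-3a\big|\ge 1$, which once more selects the nontrivial branch and yields $|b_3-2b_2^2|\le N$, whence $|1+b_3-2b_2^2|\le 1+N$. The triangle inequality then gives $|\det T_{3,1}(g)|\le(1+M)(1+N)$.

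It remains to verify that $(1+M)(1+N)$ is the stated bound, and this is where the two halves of the hypothesis dovetail. The key identity is $M+N=2a^2$, so that
\begin{equation*}
    (1+M)(1+N)=1+(M+N)+MN=1+2a^2+\tfrac{a^2}{4}\big(\tfrac{c}{2a}+a\big)\big(3a-\tfrac{c}{2a}\big),
\end{equation*}
which reproduces the claimed expression verbatim. I expect the main obstacle to be conceptual rather than computational: one must spot the factorization of $\det T_{3,1}$ and recognize that the two-sided restriction on $\Phi''(0)$ is really two separate conditions, each tailored to push a different Fekete--Szeg\"o estimate (at $\lambda=0$ and at $\lambda=2$) into the branch where its bound is the nontrivial linear expression; if either branch reverted to the constant $1$, the product would no longer collapse to the target.

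Finally, for sharpness I would reuse the extremal map $g_\Phi$ of (\ref{tildf}), whose coefficients are $b_2=i a$ and $b_3=-\tfrac12\big(a^2+\tfrac{c}{2}\big)=-M$, so that $b_3-2b_2^2=-M+2a^2=N$. Substituting into the factored determinant gives
\begin{equation*}
    \det T_{3,1}(g_\Phi)=(1-(-M))(1+N)=(1+M)(1+N),
\end{equation*}
which equals the bound and, being positive, confirms that both triangle inequalities hold with equality and with aligned phases, completing the proof.
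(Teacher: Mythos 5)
Your proposal is correct, and it reaches the paper's bound by a slightly different algebraic packaging. The core ingredients coincide with the paper's proof: both rest on the Fekete--Szeg\"o inequality (\ref{FS}) evaluated at $\lambda=0$ (giving $|b_3|\le M$ under the left half of the hypothesis) and at $\lambda=2$ (giving $|b_3-2b_2^2|\le N$ under the right half), and both use the same extremal function $g_\Phi$ of (\ref{tildf}). Where you differ is in the decomposition of the determinant: the paper keeps the additive grouping $\det T_{3,1}(g)=1-2b_2^2-b_3(b_3-2b_2^2)$ and estimates $|\det T_{3,1}(g)|\le 1+2|b_2|^2+|b_3|\,|b_3-2b_2^2|$, which requires the additional coefficient bound $|b_2|\le\Phi'(0)$ from (\ref{a2}); you instead factor $\det T_{3,1}(g)=(1-b_3)(1+b_3-2b_2^2)$ and multiply the two estimates, so the term $2(\Phi'(0))^2$ arises from the identity $M+N=2(\Phi'(0))^2$ rather than from a separate bound on $b_2$. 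The two routes produce identical numerical bounds since $(1+M)(1+N)=1+(M+N)+MN$, and your sharpness verification ($b_3=-M$, $b_3-2b_2^2=N$ for $g_\Phi$, both factors real and positive) is consistent with the paper's. One cosmetic inaccuracy: the left half of the hypothesis, $\Phi''(0)\ge 2\Phi'(0)-2(\Phi'(0))^2$, \emph{implies} $|\Phi''(0)+2(\Phi'(0))^2|\ge 2\Phi'(0)$ but is not ``exactly'' that condition (the modulus inequality also admits $\Phi''(0)+2(\Phi'(0))^2\le-2\Phi'(0)$); this does not affect the argument, and the paper is equally informal on this point.
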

\begin{proof}
    Since $\  \Phi''(0) + 2 ( \Phi'(0))^2  \geq 2 {\Phi'(0)} ,$ by (\ref{a3}),  we obtain
\begin{equation}\label{a32}
     \vert b_3 \vert \leq \frac{\Phi'(0)}{2}  \bigg( \frac{1}{2} \frac{\Phi''(0)}{\Phi'(0)} +  \Phi'(0) \bigg).
\end{equation}    Also, $ 6 (\Phi'(0))^2 - \Phi''(0) \geq 2  \Phi'(0) $ holds, hence (\ref{FS}) gives
\begin{equation}\label{FSP}
     \vert b_3 - 2 b_2^2 \vert \leq \frac{ \Phi'(0) }{2}   \bigg(   3  \Phi'(0)  - \frac{1}{2} \frac{\Phi''(0)}{\Phi'(0)} \bigg) .
\end{equation}
   From (\ref{T31}), we have
\begin{align*}
   \lvert \det T_{3,1}(g) \rvert &= \vert 2 b_2^{2} b_3 - 2 b_2^2-  b_3^2+1 \vert \\
                 &\leq  1 + 2 \vert b_2 \vert^2 + \vert b_3\vert \vert b_3 - 2 b_2^2\vert.
\end{align*}
   Using the estimates for the second and third coefficients given in (\ref{a32}) and (\ref{a3}) together with the bound of $\vert b_3 - 2 b_2^2 \vert$ given in (\ref{FSP}), required bound follows.

   The estimate is sharp for the function $g_\Phi(z) = z + \sum_{n=2}^\infty b_n z^n$ given by (\ref{tildf}). For this function, we have
   $$ 1 - 2 b_2^2 - b_3 (b_3 - 2 b_2^2) =  1 + 2 ( \Phi'(0))^2 + \frac{( \Phi'(0) )^2}{4} \bigg(  3 \Phi'(0) - \frac{\Phi''(0)}{2 \Phi'(0)}  \bigg)  \bigg( \frac{\Phi''(0)}{2 \Phi'(0)} + \Phi'(0) \bigg),$$
   which proves the sharpness of the bound.
\end{proof}
\begin{remark}
     By taking $\Phi(z)=(1 + z)/(1 - z)$, $\Phi(z)=(1 + (1- 2 \alpha) z)/(1 - z)$ and $\Phi(z) =(1 + D z)/(1 + E z)$, Theorem \ref{thm1} and \ref{thm2} can be deduced to Theorem A, Theorem B and Theorem C, respectively.
\end{remark}

    The bounds for other classes can also be obtained by changing the corresponding function $\Phi$. For $\Phi(z)= ((1+z)/(1-z))^\gamma$, the following result follows for the class $\mathcal{SS}^*(\gamma).$
\begin{corollary}
    If $g \in \mathcal{SS}^*(\gamma)$, then for $\gamma \in [1/3,1]$, the followings sharp inequalities hold:
    $$ \vert \det T_{2,2}(g) \vert \leq 9 \gamma^4 + 4 \gamma^2 \;\; \text{and} \;\;  \vert \det T_{3,1}(g) \vert \leq  15 \gamma^4 + 8 \gamma^2 + 1.$$
\end{corollary}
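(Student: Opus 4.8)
The plan is to specialize Theorems \ref{thm1} and \ref{thm2} to the choice $\Phi(z) = \left(\frac{1+z}{1-z}\right)^\gamma$, which corresponds precisely to the strongly starlike class $\mathcal{SS}^*(\gamma)$ as noted after Definition \ref{defn1}. Since both theorems are stated in terms of the two quantities $\Phi'(0)$ and $\Phi''(0)$, the entire computation reduces to evaluating these two derivatives for this particular $\Phi$ and then checking that the structural hypotheses of each theorem hold on the stated range $\gamma \in [1/3,1]$. First I would compute the Taylor expansion of $\Phi$. Writing $\left(\frac{1+z}{1-z}\right)^\gamma = \exp\!\left(\gamma \log\frac{1+z}{1-z}\right)$ and expanding $\log\frac{1+z}{1-z} = 2z + \tfrac{2}{3}z^3 + \cdots$, I expect to obtain $\Phi(z) = 1 + 2\gamma z + 2\gamma^2 z^2 + \cdots$, so that $\Phi'(0) = 2\gamma$ and $\Phi''(0) = 4\gamma^2$.

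With these values in hand, the next step is to verify the admissibility conditions. For Theorem \ref{thm1} I would check that $|\Phi''(0) + 2(\Phi'(0))^2| \geq 2\Phi'(0) > 0$; substituting gives $|4\gamma^2 + 8\gamma^2| = 12\gamma^2 \geq 4\gamma$, i.e. $3\gamma \geq 1$, which is exactly the constraint $\gamma \geq 1/3$. For Theorem \ref{thm2} I would verify the two-sided condition $2\Phi'(0) - 2(\Phi'(0))^2 \leq \Phi''(0) \leq 6(\Phi'(0))^2 - 2\Phi'(0)$; the left inequality reads $4\gamma - 8\gamma^2 \leq 4\gamma^2$, i.e. again $3\gamma \geq 1$, while the right inequality reads $4\gamma^2 \leq 24\gamma^2 - 4\gamma$, i.e. $20\gamma^2 \geq 4\gamma$, which holds for all $\gamma \in (0,1]$ and is weaker. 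Thus both hypotheses collapse to $\gamma \geq 1/3$, explaining the stated range, with $\gamma \leq 1$ coming from the definition of the class.

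The final step is pure substitution into the two bound formulas. Noting that $\frac{1}{2}\frac{\Phi''(0)}{\Phi'(0)} = \frac{1}{2}\cdot\frac{4\gamma^2}{2\gamma} = \gamma$, the Theorem \ref{thm1} bound becomes $\frac{(2\gamma)^2}{4}(\gamma + 2\gamma)^2 + (2\gamma)^2 = \gamma^2\cdot 9\gamma^2 + 4\gamma^2 = 9\gamma^4 + 4\gamma^2$. Similarly the Theorem \ref{thm2} bound becomes $1 + 2(2\gamma)^2 + \frac{(2\gamma)^2}{4}(3\cdot 2\gamma - \gamma)(\gamma + 2\gamma) = 1 + 8\gamma^2 + \gamma^2\cdot(5\gamma)(3\gamma) = 1 + 8\gamma^2 + 15\gamma^4$, matching the claimed inequalities. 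Sharpness is inherited directly from the extremal function $g_\Phi$ of \eqref{tildf}, since the corollary is a literal specialization.

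I do not anticipate a genuine obstacle here, as the result is a corollary obtained by instantiating already-proven theorems. The only point demanding minor care is the derivative computation for the power function $\Phi$, where one must correctly track the $z$ and $z^2$ coefficients of the composition $\exp(\gamma\,\mathrm{arctanh\text{-type}}\text{ series})$; an off-by-a-constant error there would propagate into both final bounds. Verifying that the admissibility ranges genuinely reduce to $\gamma \in [1/3,1]$ is the one place where I would double-check the algebra, but it is routine.
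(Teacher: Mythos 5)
Your proposal is correct and coincides with the paper's own (implicit) derivation: the corollary is obtained exactly by substituting $\Phi(z)=((1+z)/(1-z))^\gamma$, hence $\Phi'(0)=2\gamma$ and $\Phi''(0)=4\gamma^2$, into Theorems \ref{thm1} and \ref{thm2}, with both admissibility conditions reducing to $\gamma\geq 1/3$. Your derivative computation, the verification of the hypotheses, and the final substitutions all check out.
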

      Next, we extend the above results on the unit ball $\mathbb{B}$ and on the unit polydisc $\mathbb{U}^n.$
\begin{theorem}\label{thmB1}
   Let  $g \in \mathcal{H}(\mathbb{B}, \mathbb{C})$ with $g(0)=1$ and suppose that $G(z) =  z g(z) $. If $(D G(z))^{-1} G(z) \in \mathcal{M}_\Phi$ such that $\Phi$ satisfy
   $$\lvert \Phi''(0) + 2 (\Phi'(0))^2 \rvert \geq 2  \Phi'(0), $$
   then
    $$ \bigg\vert \bigg( \frac{ l_z (D^2 G(0) (z^2))}{2! \vert\vert z \vert\vert^2} \bigg)^2 - \bigg(\frac{ l_z (D^3 G(0) (z^3))}{3! \vert\vert z \vert\vert^3} \bigg)^2 \bigg\vert \leq \frac{(\Phi'(0))^2}{4}\bigg( \frac{1}{2} \frac{\Phi''(0)}{\Phi'(0)} +  \Phi'(0) \bigg)^2 + ( \Phi'(0) )^2  .$$
   The bound is sharp.
\end{theorem}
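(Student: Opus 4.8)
The plan is to reduce the statement to the one–dimensional Theorem~\ref{thm1} by restricting $G$ to complex lines through the origin, exploiting the product structure $G(z)=z\,g(z)$ with $g$ scalar-valued. First I would fix $z_0\in\mathbb{B}\setminus\{0\}$, put $u=z_0/\|z_0\|$ and choose $l_{z_0}\in T(z_0)$, so that $l_{z_0}(u)=1$, and introduce the slice
$$ h(\zeta)=l_{z_0}\big(G(\zeta u)\big),\qquad \zeta\in\mathbb{U}. $$
Because $G(\zeta u)=\zeta u\,g(\zeta u)$ and $g(0)=1$, this collapses to $h(\zeta)=\zeta\,g(\zeta u)$, a one-variable holomorphic function with $h(0)=0$ and $h'(0)=1$; write $h(\zeta)=\zeta+b_2\zeta^2+b_3\zeta^3+\cdots$. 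Inserting the homogeneous expansion $G(w)=\sum_{k\ge1}\tfrac{1}{k!}D^kG(0)(w^k)$ with $w=\zeta u$ and using the $k$-linearity of $D^kG(0)$ to replace $u$ by $z_0/\|z_0\|$ identifies
$$ b_2=\frac{l_{z_0}\big(D^2G(0)(z_0^2)\big)}{2!\,\|z_0\|^2},\qquad b_3=\frac{l_{z_0}\big(D^3G(0)(z_0^3)\big)}{3!\,\|z_0\|^3}, $$
so that the quantity to be bounded is exactly $|b_2^2-b_3^2|=|\det T_{2,2}(h)|$.

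Next I would check that $h$ meets the hypothesis of Theorem~\ref{thm1}, namely $h/h'\in\mathcal{M}_\Phi$ in the one-variable sense, i.e.\ $\zeta h'(\zeta)/h(\zeta)\in\Phi(\mathbb{U})$. The crucial observation is that the product form of $G$ forces $(DG(z))^{-1}G(z)$ to be a scalar multiple of $z$: from $DG(z)(w)=g(z)\,w+(Dg(z)(w))\,z$, solving $DG(z)(v)=G(z)$ with the ansatz $v=\lambda z$ gives $\lambda=g(z)/(g(z)+Dg(z)(z))$, whence
$$ \frac{\|z\|}{l_z\big((DG(z))^{-1}G(z)\big)}=\frac{g(z)+Dg(z)(z)}{g(z)}=1+\frac{Dg(z)(z)}{g(z)}, $$
a value independent of the choice of $l_z\in T(z)$. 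Differentiating $h(\zeta)=\zeta\,g(\zeta u)$ and setting $z=\zeta u$ produces the identical expression $\zeta h'(\zeta)/h(\zeta)=1+Dg(z)(z)/g(z)$. Thus the membership $(DG(z))^{-1}G(z)\in\mathcal{M}_\Phi$ is equivalent, line by line, to $h/h'\in\mathcal{M}_\Phi$. Since the side condition $|\Phi''(0)+2(\Phi'(0))^2|\ge2\Phi'(0)$ is exactly that of Theorem~\ref{thm1}, applying that theorem to $h$ bounds $|b_2^2-b_3^2|$ by the stated right-hand side, and as $z_0$ was arbitrary this is the claim.

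For sharpness I would fix a unit vector $v$ and $l_0\in T(v)$ and take $G(z)=z\,\widehat\psi(l_0(z))$, where $\widehat\psi(\zeta)=\exp\int_0^\zeta\frac{\Phi(it)-1}{t}\,dt$, so that $\zeta\widehat\psi(\zeta)=g_\Phi(\zeta)$ is the extremal map in (\ref{tildf}). Using $\zeta g_\Phi'(\zeta)/g_\Phi(\zeta)=\Phi(i\zeta)$ one gets $\|z\|/l_z((DG(z))^{-1}G(z))=\Phi\big(i\,l_0(z)\big)\in\Phi(\mathbb{U})$, so $G$ is admissible, and evaluating the slice along $z_0=rv$ gives $h=g_\Phi$, for which Theorem~\ref{thm1} already records equality.

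The hard part will be the second paragraph: one must verify carefully that the scalar-multiple reduction of $(DG(z))^{-1}G(z)$ is valid and that the resulting quotient is genuinely independent of $l_z\in T(z)$, so that the several-variable $\mathcal{M}_\Phi$ condition descends without loss to the one-variable subordination on each slice. Everything else is then a direct appeal to Theorem~\ref{thm1}.
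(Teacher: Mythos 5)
Your proposal is correct, and it is organized differently from the paper's proof in a way worth noting. The paper does \emph{not} reduce the whole statement to Theorem~\ref{thm1}: it bounds the third-order functional directly by quoting the several-variable Fekete--Szeg\H{o} inequality of Xu et al.\ (inequality (\ref{FSB}), with $\lambda=0$), and separately bounds the second-order functional by slicing the \emph{subordinated} quantity, i.e.\ by introducing $h(\zeta)=\zeta/l_z\bigl((DG(\zeta z_0))^{-1}G(\zeta z_0)\bigr)\prec\Phi$ and extracting $|h'(0)|\le\Phi'(0)$; the two estimates are then combined by the same triangle inequality you use. You instead slice $G$ itself, observe that $h(\zeta)=l_{z_0}(G(\zeta u))=\zeta g(\zeta u)$ has exactly the normalized Fr\'echet-derivative functionals as its Taylor coefficients, verify via the identical product-structure computation of $(DG(z))^{-1}G(z)$ that $\zeta h'(\zeta)/h(\zeta)=1+Dg(\zeta u)(\zeta u)/g(\zeta u)\in\Phi(\mathbb{U})$, and then invoke Theorem~\ref{thm1} wholesale. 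Both proofs share the two essential ingredients (the computation showing $(DG(z))^{-1}G(z)$ is a scalar multiple of $z$, and restriction to complex lines), and the same extremal function; what your route buys is self-containedness --- you never need the higher-dimensional inequality (\ref{FSB}), since your slicing argument effectively re-derives it from the one-variable inequality (\ref{FS}) already used in Theorem~\ref{thm1} --- at the cost of having to justify carefully (as you flag yourself) that the scalar-multiple reduction and the independence of the choice of $l_z\in T(z)$ make the membership in $\mathcal{M}_\Phi$ descend to each slice; that verification does go through exactly as you sketch it, since $l_z(\lambda z)=\lambda\|z\|$ for every $l_z\in T(z)$.
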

\begin{proof}
  Xu et al. \cite[Theorem 3.2]{Xu} proved that
\begin{equation}\label{FSB}
\begin{aligned}
   \bigg\vert & \frac{ l_z (D^3 G(0) (z^3))}{3! \vert\vert z \vert\vert^3} -  \lambda \bigg(\frac{ l_z (D^2 G(0) (z^2))}{2! \vert\vert z \vert\vert^2} \bigg)^2 \bigg\vert \\
   &\leq \frac{\vert \Phi'(0) \vert}{2} \max \left\{ 1,  \left\lvert \frac{1}{2} \frac{\Phi''(0)}{\Phi'(0)} + (1 - 2 \lambda)  \Phi'(0) \right\rvert \right\} , \quad \lambda \in \mathbb{C}, z \in \mathbb{B}\setminus \{0 \}.
\end{aligned}
\end{equation}
   Since $\lvert \Phi''(0) + 2 (\Phi'(0))^2 \rvert \geq 2  \Phi'(0),$ the above inequality gives
\begin{equation}\label{a3B}
     \bigg\vert  \frac{ l_z (D^3 G(0) (z^3))}{3! \vert\vert z \vert\vert^3} \bigg\vert \leq \frac{\Phi'(0)}{2}  \left\vert \frac{1}{2} \frac{\Phi''(0)}{\Phi'(0)} +  \Phi'(0) \right\vert.
\end{equation}
    On the other hand, applying a similar method as in \cite[Theorem 7.1.14]{GraKoh} (also see \cite[Theorem 3.2]{Xu}), we obtain
    $$ (D G(z))^{-1} = \frac{1}{g(z)} \bigg( I - \frac{\frac{z D g(z)}{g(z)}}{1 + \frac{D g(z) z}{g(z)}} \bigg). $$
    Therefore
    $$ (D G(z))^{-1} G(z) = z \bigg( \frac{z g(z) }{g(z) + D g(z) z} \bigg),  \quad z\in \mathbb{B}, $$
    which directly gives
\begin{equation}\label{newe}
   \frac{\| z\|}{l_z ((D G(z))^{-1} G(z))} = 1 + \frac{D g(z) z}{g(z)}.
\end{equation}
  For fix $z\in X\setminus \{ 0 \}$ and $z_0 = \frac{z}{\|z \|}$,  define the function $ h : \mathbb{U} \rightarrow \mathbb{C}$ such that
\begin{equation*}
    h(\zeta) = \left\{ \begin{array}{ll}
     \dfrac{\zeta}{ l_z ((D G(\zeta z_0))^{-1} G( \zeta z_0) )}, & \zeta \neq 0, \\ \\
    1, & \zeta =0.
    \end{array}
    \right.
\end{equation*}
   Then $h \in \mathcal{H}(\mathbb{U})$ and $h(0)=1  = \Phi(0) $. Further, since $(D G(z))^{-1} G(z) \in \mathcal{M}_\Phi$, we find that
\begin{align*}
   h(\zeta) =& \frac{\zeta}{l_z ((D G(\zeta z_0))^{-1} G( \zeta z_0) ) } = \frac{\zeta}{l_{z_0} ((D G(\zeta z_0))^{-1} G( \zeta z_0) ) } \\
             =& \frac{\| \zeta z_0 \| }{l_{ \zeta z_0} ((D G(\zeta z_0))^{-1} G( \zeta z_0) ) } \in \Phi(\mathbb{U}), \quad  \zeta \in \mathbb{U}.
\end{align*}
   Taking (\ref{newe}) into consideration, we obtain
\begin{equation}\label{accr}
    h(\zeta) = \frac{\| \zeta z_0 \| }{l_{ \zeta z_0} ((Dg(\zeta z_0))^{-1} g( \zeta z_0) ) }  =  1 + \frac{D g(\zeta z_0)\zeta z_0}{g(\zeta z_0)}.
\end{equation}
   In view of the Taylor series expansions of $h(\zeta)$ and $g(\zeta z_0)$, the above equation gives
\begin{align*}
   \bigg(1 + & h'(0) \zeta  + \frac{h''(0)}{2} \zeta^2 + \cdots \bigg)\bigg( 1 + Dg(0)(z_0) \zeta + \frac{ D^2 g(0)(z_{0}^2)}{2} \zeta^2 + \cdots \bigg)  \\
   & =\bigg( 1 + Dg(0)(z_0) \zeta + \frac{ D^2 g(0)(z_{0}^2)}{2} \zeta^2 + \cdots \bigg) \bigg( Dg(0)(z_0) \zeta +  D^2 g(0)(z_{0}^2)\zeta^2 + \cdots \bigg).
\end{align*}
   Comparison of homogeneous expansions yield that $ h'(0) = D g(0)(z_0).$ That is
\begin{equation}\label{new2}
    h'(0) \| z \| = D g(0)(z).
\end{equation}
   Since $G(z) = z g(z)$, therefore, we have
\begin{equation}\label{mor}
    \frac{ D^2 G(0) (z^2)}{2! } =  D g(0)(z)  z.
\end{equation}
   Moreover, from (\ref{mor}), we conclude that
\begin{align}\label{alig}
    \frac{ l_z (D^2 G(0) (z^2))}{2! } &=  D g(0)(z) \| z \|.
\end{align}
    Thus, equation (\ref{alig}) together with (\ref{new2}) gives
\begin{align*}
   \bigg\vert \frac{ l_z (D^2 G(0) (z^2))}{2! }\bigg\vert = \vert D g(0)(z) \| z \| \vert =\vert h'(0) \| z \|^2 \vert .
\end{align*}
   Since $h \prec \Phi$, therefore $\vert h'(0) \vert \leq \Phi'(0) $. Consequently, we obtain
\begin{equation}\label{a2B}
    \bigg\vert \frac{ l_z (D^2 G(0) (z^2))}{2! \| z \|^2}\bigg\vert \leq  \Phi'(0) .
\end{equation}
   Using the bounds given in (\ref{a2B}) and (\ref{a3B}) together with
\begin{align*}
    \bigg\lvert \bigg( \frac{ l_z (D^2 G(0) (z^2))}{2! \| z \|^2}  \bigg)^2 - \bigg(  \frac{ l_z (D^3 G(0) (z^3))}{3! \vert\vert z \vert\vert^3} \bigg)^2 \bigg\rvert  \leq  \bigg\vert  \frac{ l_z (D^3 G(0) (z^3))}{3! \vert\vert z \vert\vert^3} \bigg\vert^2 + \bigg\vert \frac{ l_z (D^2 G(0) (z^2))}{2! \| z \|^2}  \bigg\vert^2
\end{align*}
   the required bound follows.

   To see the sharpness, consider the function $G$ given by
\begin{equation}\label{extB}
    G(z) = z \exp \int_0^{l_u(z)} \frac{( \Phi(i t)-1) }{t}dt, \quad z\in \mathbb{B}, \quad \vert\vert u \vert\vert=1.
\end{equation}
   It is a simple exercise to see that $ (D  G(z))^{-1}  G(z) \in \mathcal{M}_\Phi$ and a quick calculation reveals that
   $$  \frac{D^2  G(0) (z^2)}{2!}= i \Phi'(0) l_u(z) z \;\; \text{and} \;\; \frac{D^3  G(0) (z^3)}{3!} = - \frac{1}{2} \left( \frac{\Phi''(0)}{2} + (\Phi'(0))^2 \right) (l_u (z))^2 z. $$
   In view of the above equations, we have
   $$ \frac{l_z (D^2  G(0) ( z^2) )}{2!} = i \Phi'(0) l_u(z) \| z \|  $$
   and
   $$  \frac{l_z (D^3  G(0) ( z^3 )) \| z \|}{3!} = - \frac{1}{2} \left( \frac{\Phi''(0)}{2} + (\Phi'(0))^2 \right) (l_u (z))^2 \| z \|^2. $$
   Setting $z = r u$ $(0< r <1)$, we get
\begin{equation}\label{cftB}
   \frac{l_z (D^2  G(0) ( z^2) )}{2! \|z \|^2} = i \Phi'(0) \;\; \text{and} \;\;  \frac{l_z (D^3  G(0) ( z^3) ) }{3! \| z \|^3}  = - \frac{1}{2} \left( \frac{\Phi''(0)}{2} + (\Phi'(0))^2 \right) .
\end{equation}
   Thus for the function $G$, we have
  $$ \bigg\vert \bigg( \frac{ l_z (D^2 G(0) (z^2))}{2! \vert\vert z \vert\vert^2} \bigg)^2 - \bigg(\frac{ l_z (D^3 G(0) (z^3))}{3! \vert\vert z \vert\vert^3} \bigg)^2 \bigg\vert = \frac{(\Phi'(0))^2}{4}\left( \frac{1}{2} \frac{\Phi''(0)}{\Phi'(0)} +  \Phi'(0) \right)^2 + \lvert \Phi'(0)\rvert^2, $$
  which proves the sharpness of the bound.
\end{proof}
\begin{theorem}\label{thmB2}
   Let  $g \in \mathcal{H}(\mathbb{B}, \mathbb{C})$ with $g(0)=1$ and suppose that $G(z) =  z g(z)$. If $(D G(z))^{-1} G(z) \in \mathcal{M}_\Phi$ such that $\Phi$ satisfy
   $$ 2 \Phi'(0) - 2 (\Phi'(0))^2 \leq {\Phi''(0)} \leq 6 (\Phi'(0))^2 - 2 \Phi'(0), $$
    then
   $$ \vert 2 b_2^2 b_3  - b_3^2 - 2 b_2^2 + 1 \vert \leq 1 + 2 ( \Phi'(0))^2 + \frac{ ( \Phi'(0) )^2}{4} \bigg(  3 \Phi'(0) - \frac{\Phi''(0)}{2 \Phi'(0)} \bigg)  \bigg( \frac{\Phi''(0)}{2 \Phi'(0)} + \Phi'(0) \bigg),$$
   where
\begin{align*}
   b_3 = \frac{ l_z (D^3 G(0) (z^3))}{3! \vert\vert z \vert\vert^3} \;\; \text{and} \;\;  b_2 &= \frac{ l_z (D^2 G(0) (z^2))}{2! \vert\vert z \vert\vert^2}.
\end{align*}
   The bound is sharp.
\end{theorem}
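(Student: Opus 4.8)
The plan is to mirror the one-variable argument of Theorem \ref{thm2}, because the two quantities $b_2$ and $b_3$ appearing in the statement are precisely the normalized functionals whose moduli were already controlled in the proof of Theorem \ref{thmB1}. Consequently the whole argument is pointwise in $z \in \mathbb{B}\setminus\{0\}$ (and in the choice of $l_z \in T(z)$), and it reduces to assembling three scalar estimates and feeding them into a triangle inequality, exactly as in \eqref{T31}.

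First I would collect the two ingredients already available from Theorem \ref{thmB1}. The left half of the hypothesis, $2\Phi'(0) - 2(\Phi'(0))^2 \leq \Phi''(0)$, is equivalent to $\Phi''(0) + 2(\Phi'(0))^2 \geq 2\Phi'(0) > 0$, which is the condition under which \eqref{a3B} was derived (the $\lambda = 0$ specialization of the Fekete--Szeg\H{o} inequality \eqref{FSB}); since the bracket is then nonnegative this gives
$$ \vert b_3 \vert \leq \frac{\Phi'(0)}{2}\left( \frac{1}{2}\frac{\Phi''(0)}{\Phi'(0)} + \Phi'(0) \right), $$
while \eqref{a2B} yields $\vert b_2 \vert \leq \Phi'(0)$. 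Next I would produce the higher-dimensional analogue of \eqref{FSP} by specializing \eqref{FSB} to $\lambda = 2$, obtaining the majorant $\tfrac{\Phi'(0)}{2}\max\{1, \vert \tfrac{1}{2}\tfrac{\Phi''(0)}{\Phi'(0)} - 3\Phi'(0)\vert\}$. The right half of the hypothesis, $\Phi''(0) \leq 6(\Phi'(0))^2 - 2\Phi'(0)$, is exactly $3\Phi'(0) - \tfrac{1}{2}\tfrac{\Phi''(0)}{\Phi'(0)} \geq 1$, so the maximum is attained at the second entry and
$$ \vert b_3 - 2 b_2^2 \vert \leq \frac{\Phi'(0)}{2}\left( 3\Phi'(0) - \frac{1}{2}\frac{\Phi''(0)}{\Phi'(0)} \right). $$

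Then I would combine the estimates. Writing $2 b_2^2 b_3 - b_3^2 - 2 b_2^2 + 1 = 1 - 2 b_2^2 - b_3(b_3 - 2 b_2^2)$ and applying the triangle inequality gives the bound $1 + 2\vert b_2\vert^2 + \vert b_3\vert\,\vert b_3 - 2 b_2^2\vert$, into which the three displayed estimates substitute directly to produce the asserted right-hand side. For sharpness I would take the mapping $G$ of \eqref{extB}; its coefficient values \eqref{cftB}, namely $b_2 = i\Phi'(0)$ and $b_3 = -\tfrac{1}{2}(\tfrac{\Phi''(0)}{2} + (\Phi'(0))^2)$, make $b_2^2 = -(\Phi'(0))^2$ real, and a short factorization shows that $2 b_2^2 b_3 - b_3^2 - 2 b_2^2 + 1$ equals the stated expression, so equality holds.

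The only genuine point to verify, rather than a real obstacle, is that the identifications of $b_2$ and $b_3$ in terms of the Fr\'echet derivatives of $G$ from Theorem \ref{thmB1} transfer verbatim and that the two halves of the two-sided hypothesis align exactly with the two Fekete--Szeg\H{o} specializations ($\lambda = 0$, encoded in \eqref{a3B}, and $\lambda = 2$). Given this, the inequality is a transcription of the scalar case and the sharpness rests solely on the algebraic identity checked above.
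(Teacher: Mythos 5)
Your proposal is correct and follows essentially the same route as the paper: the paper likewise specializes \eqref{FSB} at $\lambda=0$ and $\lambda=2$ (its \eqref{2a3B} and \eqref{FS2B}), reuses the bound \eqref{a2B} on $b_2$ from Theorem \ref{thmB1}, feeds all three into the triangle-inequality decomposition $1+2\vert b_2\vert^2+\vert b_3\vert\,\vert b_3-2b_2^2\vert$, and verifies sharpness with the same extremal mapping \eqref{extB} via the values in \eqref{cftB}. Your observation that the two halves of the two-sided hypothesis are exactly the conditions making the two Fekete--Szeg\H{o} maxima land on the second entry matches the paper's reasoning.
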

\begin{proof}
      Since $ 2 \Phi'(0) <  \Phi''(0) + 2 (\Phi'(0))^2$, therefore from (\ref{FSB}), we have
\begin{equation}\label{2a3B}
    \bigg\vert  \frac{ l_z (D^3 G(0) (z^3))}{3! \vert\vert z \vert\vert^3} \bigg\vert \leq \frac{\Phi'(0)}{2} \bigg( \frac{1}{2} \frac{\Phi''(0)}{\Phi'(0)} +  \Phi'(0) \bigg).
\end{equation}
     Again, since $ 2 \Phi'(0)  + \Phi''(0) \leq 6 (\Phi'(0))^2 ,$ the inequality (\ref{FSB}) directly gives
\begin{equation}\label{FS2B}
     \bigg\vert\frac{ l_z (D^3 G(0) (z^3))}{3! \vert\vert z \vert\vert^3} - 2 \bigg(  \frac{ l_z (D^2 G(0) (z^2))}{2! \vert\vert z \vert\vert^2} \bigg)^2 \bigg\vert \leq  \frac{\Phi'(0) }{2}   \bigg( 3  \Phi'(0)  - \frac{1}{2} \frac{\Phi''(0)}{\Phi'(0)} \bigg).
\end{equation}
     Also, we have
\begin{equation}\label{T31B}
    \vert 2 b_2^2 b_3  - b_3^2 - 2 b_2^2 + 1 \vert \leq  1 + 2 \vert b_2 \vert^2 + \vert b_3\vert \vert b_3 - 2 b_2^2\vert.
\end{equation}
  The required bound is derived by using the estimates given in (\ref{a2B}) and (\ref{2a3B}), and the bound given by (\ref{FS2B}) in the above inequality.

   The equality case holds for the function $G(z)$ defined by (\ref{extB}). It follows from (\ref{cftB}) that for this function, we have  $b_2 = i \Phi'(0)$,
   $ b_3  = -( \Phi''(0) + 2(\Phi'(0))^2 )/4$ and hence
   $$   1 - b_3 ( b_3 - 2 b_2^2 ) - 2 b_2^2  = 1 + 2 ( \Phi'(0) )^2 + \frac{ ( \Phi'(0) )^2}{4} \bigg( 3 \Phi'(0)  - \frac{\Phi''(0)}{2 \Phi'(0)} \bigg)  \bigg( \frac{\Phi''(0)}{2 \Phi'(0)} + \Phi'(0) \bigg),$$
   which shows the sharpness of the bound.
\end{proof}
\begin{theorem}\label{ThmUn1}
    Let $g \in \mathcal{H}(\mathbb{U}^n, \mathbb{C})$ with $g(0)=1$  and suppose that $G(z) =  z g(z)$. If $(D G(z))^{-1} G(z) \in \mathcal{M}_\Phi$ such that $\Phi$ satisfies
    $$ \lvert \Phi''(0) + 2 (\Phi'(0))^2 \rvert \geq 2  \Phi'(0),$$
   then
\begin{equation}\label{res}
\begin{aligned}
\left.
\begin{array}{ll}
   \bigg\|& \bigg(  \dfrac{ D^3 G(0) (z^3)}{3! } \bigg)^2  - \bigg( \dfrac{D^2 G(0) (z^2)}{2! } \bigg)^2 \bigg\|  \\
      &\leq   \dfrac{( \Phi'(0) )^2 \| z\|^6}{4} \bigg(  \dfrac{1}{2} \dfrac{\Phi''(0)}{\Phi'(0)} +   \Phi'(0) \bigg)^2  +\left( \vert \Phi'(0)\vert \| z \|^2 \right)^2 , \;\;\; z \in \mathbb{U}^n.
\end{array}
\right\}
\end{aligned}
\end{equation}
   The bound is sharp.
\end{theorem}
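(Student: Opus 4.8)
The plan is to exploit the special form $G(z)=zg(z)$ to collapse the vector-valued quantity into a scalar Fekete--Szeg\"o problem, and then to transport the coefficient estimates already established on the unit ball in Theorem \ref{thmB1} to the polydisc, viewing $\mathbb{U}^n$ as the complex Banach space $(\mathbb{C}^n,\|\cdot\|_\infty)$.

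First I would record the homogeneous expansions of $G$. Writing $g(z)=1+Dg(0)(z)+\tfrac{1}{2!}D^2g(0)(z^2)+\cdots$ and using $G(z)=zg(z)$ exactly as in (\ref{mor}), each component satisfies $G_k(z)=z_k g(z)$, whence
$$\frac{D^2G(0)(z^2)}{2!}=Dg(0)(z)\,z,\qquad \frac{D^3G(0)(z^3)}{3!}=\frac{D^2g(0)(z^2)}{2!}\,z.$$
Thus both derivative vectors are scalar multiples of $z$; setting $b_2:=Dg(0)(z)$ and $b_3:=\tfrac{1}{2!}D^2g(0)(z^2)$ (scalars, homogeneous in $z$ of degrees $1$ and $2$), the componentwise square produces a vector whose $k$-th entry is $(b_3^2-b_2^2)z_k^2$. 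Taking the max norm then gives the clean reduction
$$\left\|\left(\frac{D^3G(0)(z^3)}{3!}\right)^2-\left(\frac{D^2G(0)(z^2)}{2!}\right)^2\right\|=|b_3^2-b_2^2|\,\|z\|^2\le(|b_3|^2+|b_2|^2)\,\|z\|^2,$$
the last inequality being the same elementary device $|b_3^2-b_2^2|\le|b_3|^2+|b_2|^2$ used in Theorems \ref{thm1} and \ref{thmB1}.

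It then remains to estimate $|b_2|$ and $|b_3|$. Since $(\mathbb{C}^n,\|\cdot\|_\infty)$ is a complex Banach space, the argument of Theorem \ref{thmB1} applies with $l_z$ chosen as the coordinate functional $w\mapsto(\bar z_k/|z_k|)\,w_k$ at an index $k$ with $|z_k|=\|z\|$. Applying $l_z$ to the two identities above and using $l_z(z)=\|z\|$ yields $\tfrac{1}{2!}l_z(D^2G(0)(z^2))=b_2\|z\|$ and $\tfrac{1}{3!}l_z(D^3G(0)(z^3))=b_3\|z\|$, so $b_2$ and $b_3$ are exactly $\|z\|$ and $\|z\|^2$ times, respectively, the normalized functional coefficients bounded in (\ref{a2B}) and (\ref{a3B}). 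Under the standing hypothesis $|\Phi''(0)+2(\Phi'(0))^2|\ge2\Phi'(0)$ this gives $|b_2|\le\Phi'(0)\|z\|$ and $|b_3|\le\tfrac{\Phi'(0)}{2}\big(\tfrac{1}{2}\tfrac{\Phi''(0)}{\Phi'(0)}+\Phi'(0)\big)\|z\|^2$, and substituting into the displayed estimate reproduces precisely the asserted right-hand side $\tfrac{(\Phi'(0))^2\|z\|^6}{4}(\cdots)^2+(|\Phi'(0)|\|z\|^2)^2$.

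For sharpness I would take the extremal mapping (\ref{extB}) with a fixed unit vector $u$ (for instance $u=e_1$, $l_u(z)=z_1$) and evaluate at $z=ru$. The computation (\ref{cftB}) then gives $b_2=ir\Phi'(0)$ and $b_3=-\tfrac{r^2}{2}\big(\tfrac{\Phi''(0)}{2}+(\Phi'(0))^2\big)$; since $b_2^2$ is negative real while $b_3^2$ is positive real, the inequality $|b_3^2-b_2^2|\le|b_3|^2+|b_2|^2$ collapses to an equality and the bound is attained. The only genuinely delicate point is the homogeneity bookkeeping in the first step: one must read the square as a componentwise (Hadamard) square and verify that the max norm factors out $\|z\|^2$ while leaving the scalar $|b_3^2-b_2^2|$ intact. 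Once this reduction is secured, the remainder is a direct transcription of the unit-ball case.
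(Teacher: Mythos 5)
Your proposal is correct, and it reaches the stated bound by a route that is recognizably the same in outline (factor $G=zg$, reduce to the two scalar coefficients of $g$, bound them via the subordination $h\prec\Phi$ and the Fekete--Szeg\H{o} machinery of Xu et al., finish with $|A^2-B^2|\le|A|^2+|B|^2$ and the same extremal map) but genuinely different in its middle step. The paper works componentwise: it first proves the bound (\ref{a2^2}) on the second-order term only at the distinguished boundary $\partial_0\mathbb{U}^n$ (because the subordination argument controls only the index $k$ with $|z_k|=\|z\|$), then invokes the maximum modulus theorem on the polydisc to propagate it to (\ref{use2}), and it bounds the third-order components via the polydisc-specific estimate (\ref{FSUn}) of Xu et al.\ (their Theorem 3.3) rather than the Banach-space estimate (\ref{FSB}). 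You instead observe that both derivative vectors are the \emph{scalars} $b_2=Dg(0)(z)$ and $b_3=D^2g(0)(z^2)/2!$ times $z$, so the max norm of the Hadamard-square difference factors exactly as $|b_3^2-b_2^2|\,\|z\|^2$; this makes the maximum modulus argument unnecessary, since $|Dg(0)(z)|=\|z\|\,|h_k'(0)|\le\Phi'(0)\|z\|$ holds for every $z$ directly, and it lets you import (\ref{a2B}) and (\ref{a3B}) by viewing $\mathbb{U}^n$ as the unit ball of $(\mathbb{C}^n,\|\cdot\|_\infty)$ with the coordinate functional $l_z(w)=(\bar z_k/|z_k|)w_k\in T_z$ (for which one checks $\|z\|/l_z(p(z))=z_k/p_k(z)$, so the two forms of Definition \ref{defn1} agree). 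What your route buys is economy: one extremal-problem input instead of two, and no boundary/maximum-modulus step. What the paper's route buys is that the componentwise estimates (\ref{use2}) and (\ref{a3Un}) are of independent use in Theorem \ref{thmUn2}, where the relevant quantity $D^2G_k(0)(z,\cdot)$ no longer collapses quite so cleanly. Your sharpness discussion (taking $u=e_1$ in (\ref{extB}), i.e.\ the function (\ref{extUn}), evaluated at $z=(r,0,\dots,0)$, where $b_2^2$ is negative real and $b_3^2$ positive real so the triangle inequality is an equality) matches the paper's.
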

\begin{proof}
   For $z \in \mathbb{U}^n \setminus \{ 0 \}$, let $z_0 = \frac{z}{\| z\|}$. Define $h_k : \mathbb{U} \rightarrow \mathbb{C}$ such that
\begin{equation}\label{hkzeta}
   h_k (\zeta) =
\left\{
\begin{array}{ll}
     \dfrac{\zeta z_k}{p_k (\zeta z_0) \| z_0 \|}, & \zeta \neq 0,\\
     1 , & \zeta =0,
\end{array}
\right.
\end{equation}
   where $p(z) = (D G(z))^{-1} G(z)$ and $k$ satisfies $\vert z_k \vert = \| z\| = \max_{1 \leq j \leq n} \{ \vert z_j \vert \}$.
   Since $(D (G(z)))^{-1} G(z) \in \mathcal{M}_\Phi$, we have $h_k (\zeta) \in \Phi (\mathbb{U})$.
   Further, using (\ref{accr}), we have
   $$ h_k (\zeta) =  1 + \frac{D g( \zeta z_0) \zeta z_0}{g(\zeta z_0)} $$
   or equivalently,
   $$  h_k (\zeta) g(\zeta z_0) =  g(\zeta z_0) + D g(\zeta z_0) \zeta z_0 . $$
   A comparison of homogeneous expansions obtained by the Taylor series expansions of $g$ and $h_k$ about $\zeta$ gives
\begin{equation}\label{use0}
    h'_k (0) = D g(0)(z_0), \quad \frac{h''_k (0)}{2} = D^2 g(0) (z_0^2 ) - (D g(0) (z_0))^2.
\end{equation}
   Also, using $G(z_0) =  z_0 g(z_0)$, we have
\begin{equation}\label{use}
    \frac{D^3 G_k(0) (z_0^3)}{3!} = \frac{D^2 g(0) (z_0^2)}{2!} \frac{z_k}{ \| z\|}\;\;\; \text{and} \;\; \;  \frac{D^2 G_k(0) (z_0^2)}{2!} = D g(0) (z_0) \frac{z_k}{ \| z\|}.
\end{equation}
   Thus, from (\ref{use0}) and (\ref{use}), we obtain
   $$ \frac{D^2 G_k(0) (z_0^2)}{2!} \frac{\| z \|}{z_k}  =   h'_k(0), $$
   which gives
\begin{equation*}
    \left\vert \frac{D^2 G_k(0) (z_0^2)}{2!} \frac{\| z \|}{z_k} \right\vert =  \vert  h'_k(0) \vert \leq  \Phi'(0).
\end{equation*}
   If $z_0 \in \partial_0 \mathbb{U}^n$, then
\begin{equation}\label{a2^2}
    \left\vert \frac{D^2 G_k(0) (z_0^2)}{2!}  \right\vert  \leq  \Phi'(0).
\end{equation}   Since
   $$  \frac{D^2 G_k(0) (z_0^2)}{2!}, \quad k = 1,2,\cdots n$$
   are holomorphic on $\overline{\mathbb{U}}^n$, by virtue of the maximum modulus theorem of holomorphic functions on the unit polydisc, we obtain
   $$ \left\vert \frac{D^2 G_k(0) (z_0^2)}{2! }  \right\vert  \leq  \Phi'(0) , \quad  z_0 \in \partial \mathbb{U}^n, k= 1,2,3,\cdots n.$$
   That is
\begin{equation}\label{use2}
    \left\vert \frac{D^2 G_k(0) (z^2)}{2! }  \right \vert  \leq  \Phi'(0) \| z \|^2, \quad  z \in \mathbb{U}^n, k= 1,2,3,\cdots n .
\end{equation}
   Therefore
    $$ \left\| \frac{D^2 G(0) (z^2)}{2! }  \right\|  \leq   \Phi'(0)  \| z \|^2, \quad  z \in \mathbb{U}^n . $$
    According to the result established by Xu et al.\cite[Theorem 3.3]{Xu}, we have
\begin{equation}\label{FSUn}
\begin{aligned}
\left.
\begin{array}{ll}
    \bigg\vert & \dfrac{ D^3 G_k(0) (z^3)}{3! }  -  \lambda \dfrac{1}{2} D^2  G_k (0)  \bigg( z, \dfrac{D^2 G(0) (z^2)}{2!} \bigg) \bigg\vert \\
    &\leq \dfrac{\vert \Phi'(0) \vert \| z\|^3}{2} \max\bigg\{1,  \left\lvert \dfrac{1}{2} \dfrac{\Phi''(0)}{\Phi'(0)} + (1 - 2 \lambda)  \Phi'(0) \right\rvert \bigg\} , \;\; k= 1,2, \cdots n.
\end{array}
\right\}
\end{aligned}
\end{equation}
   Since $\lvert \Phi''(0) + 2 (\Phi'(0))^2 \rvert \geq 2 \Phi'(0),$ therefore from (\ref{FSUn}), it follows that
\begin{equation}\label{a3Un}
    \bigg\vert  \frac{ D^3 G_k(0) (z^3)}{3! }   \bigg\vert  \leq \frac{\vert \Phi'(0) \vert \| z\|^3}{2}   \left\vert \frac{1}{2} \frac{\Phi''(0)}{\Phi'(0)} +   \Phi'(0) \right\vert  , \quad z \in \mathbb{U}^n ,k = 1,2, \cdots n.
\end{equation}
   Now, using the bounds given in (\ref{use2}) and (\ref{a3Un}), we obtain
\begin{align*}
   \bigg\vert \bigg( & \frac{ D^3 G_k(0) (z^3)}{3! } \bigg)^2  - \bigg( \frac{D^2 G_k(0) (z^2)}{2! } \bigg)^2 \bigg\vert  \\
      &\leq  \frac{ ( \Phi'(0) )^2  \| z\|^6}{4} \bigg( \frac{1}{2} \frac{\Phi''(0)}{\Phi'(0)} +  \Phi'(0)  \bigg)^2  +\left( \Phi'(0) \right)^2 \| z \|^4 , \quad k=1,2, \cdots n.
\end{align*}
    Therefore,
\begin{align*}
   \bigg\| \bigg(  \frac{ D^3 G(0) (z^3)}{3! } \bigg)^2 & - \bigg( \frac{D^2 G(0) (z^2)}{2! } \bigg)^2 \bigg\|  \\
      &\leq  \frac{( \Phi'(0))^2  \| z\|^6}{4} \bigg( \frac{1}{2} \frac{\Phi''(0)}{\Phi'(0)} +   \Phi'(0) \bigg)^2  + (\Phi'(0) )^2  \| z \|^4 ,
\end{align*}
   which is the required bound.

   To prove the sharpness, consider the function
\begin{equation}\label{extUn}
    G(z) = z \exp \int_0^{z_1} \frac{\Phi(i t)-1}{t}dt, \quad z\in \mathbb{U}^n.
\end{equation}
    It is a simple exercise to check that $D(G(z))^{-1}G(z) \in \mathcal{M}_\Phi$. From the above relation, we deduce that
     $$  \frac{D^2  G(0) (z^2)}{2!}= i \Phi'(0) z_1 z  , \;\; \frac{D^3  G(0) (z^3)}{3!} = - \frac{1}{2} \left( \frac{\Phi''(0)}{2} + (\Phi'(0))^2 \right) (z_1)^2 z. $$
   By taking $z= (r, 0, \cdots, 0)$, the equality in (\ref{res}) holds.
\end{proof}
\begin{theorem}\label{thmUn2}
    Let $g \in \mathcal{H}(\mathbb{U}^n, \mathbb{C})$ with $g(0)=1$  and suppose that $G(z) =  z g(z)$.  If $(D G(z))^{-1} G(z) \in \mathcal{M}_\Phi$ such that $\Phi$ satisfy
    $$ 2 \Phi'(0) - 2 (\Phi'(0))^2 \leq {\Phi''(0)} \leq 6 (\Phi'(0))^2 - 2 \Phi'(0) ,$$
    then
\begin{align*}
     \| 2 b_2^2 b_3 & - b_3^2 - 2 b_2^2 + 1 \|   \\
     & \leq 1 +  \frac{( \Phi'(0) )^2 \| z\|^6}{4}   \bigg( 3 \Phi'(0) - \frac{1}{2} \frac{\Phi''(0)}{\Phi'(0)} \bigg)    \bigg( \frac{1}{2} \frac{\Phi''(0)}{\Phi'(0)} +   \Phi'(0) \bigg)  + 2 ( \Phi'(0) )^2 \| z\|^4,
\end{align*}
   where
\begin{align*}
       b_3 = \frac{ D^3 G(0) (z^3)}{3! }   \;\; \text{and} \;\; b_2^2 =  \frac{1}{2} D^2  G (0)  \bigg( z, \frac{D^2 G(0) (z^2)}{2!}\bigg).
\end{align*}
   The bound is sharp.
\end{theorem}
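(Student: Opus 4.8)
The plan is to establish this as the unit-polydisc analogue of the ball estimate in Theorem \ref{thmB2}, by combining the third-order Toeplitz decomposition used there with the coordinatewise, maximum-modulus argument already developed in the proof of Theorem \ref{ThmUn1}. As in the disk and ball cases the computational core is the identity
\[
    2 b_2^2 b_3 - b_3^2 - 2 b_2^2 + 1 = 1 - 2 b_2^2 - b_3\,(b_3 - 2 b_2^2),
\]
which, read off in the coordinate $k$ for which $|z_k| = \|z\|$ and followed by the triangle inequality, reduces the problem to bounding, in that coordinate, the three quantities $|b_2|$, $|b_3|$ and the Fekete–Szeg\"o type expression $|b_3 - 2 b_2^2|$, after which one passes to the sup-norm over $\mathbb{U}^n$.

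Each of these three ingredients is already available from the proof of Theorem \ref{ThmUn1}, so I would simply invoke them. The estimate for $|b_2|$, that is for $|D^2 G_k(0)(z^2)/2!|$, is exactly (\ref{use2}), which was obtained there through the auxiliary functions $h_k$, the identity (\ref{accr}), and the maximum modulus theorem on the polydisc. The estimate for $|b_3|$ is (\ref{a3Un}); it is valid precisely because the lower hypothesis $2\Phi'(0) - 2(\Phi'(0))^2 \le \Phi''(0)$ forces $|\Phi''(0) + 2(\Phi'(0))^2| \ge 2\Phi'(0)$, so that the maximum in the Fekete–Szeg\"o inequality (\ref{FSUn}) with $\lambda = 0$ is realized by its second entry. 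For the last quantity I would specialize (\ref{FSUn}) to $\lambda = 2$, in which case $\lambda\,\frac12 D^2 G_k(0)\big(z,\frac{D^2 G(0)(z^2)}{2!}\big)$ is exactly $2 b_2^2$; here the upper hypothesis $\Phi''(0) \le 6(\Phi'(0))^2 - 2\Phi'(0)$ is precisely the condition under which the maximum is again attained by the second entry, yielding
\[
    \big| b_3 - 2 b_2^2 \big| \le \frac{\Phi'(0)\,\|z\|^3}{2}\Big( 3\Phi'(0) - \tfrac12\tfrac{\Phi''(0)}{\Phi'(0)} \Big)
\]
in the coordinate $k$. Thus the two sides of the hypothesis on $\Phi''(0)$ are nothing other than the conditions needed for the $\lambda = 0$ and $\lambda = 2$ specializations.

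Inserting the three bounds into the coordinatewise triangle inequality produces, for each $k$, the right-hand side of the theorem. Since the coordinates of $D^2 G(0)(z^2)$ and $D^3 G(0)(z^3)$ are holomorphic on $\overline{\mathbb{U}}^n$, the maximum modulus theorem (exactly as in Theorem \ref{ThmUn1}) allows one to move the coordinate estimates from the distinguished boundary to all of $\mathbb{U}^n$ and then collect them into the sup-norm, giving the claimed inequality. For sharpness I would reuse the extremal mapping (\ref{extUn}), $G(z) = z\exp\int_0^{z_1}(\Phi(it)-1)/t\,dt$; from the second and third homogeneous terms recorded there, evaluation at $z = (r,0,\dots,0)$ yields $b_2 = i\Phi'(0)$ and $b_3 = -(\Phi''(0)+2(\Phi'(0))^2)/4$, which reproduce the stated bound term by term, just as in Theorem \ref{thmB2}.

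I expect the main obstacle to be bookkeeping rather than analysis: one must fix once and for all the coordinatewise reading of the vector expression $2 b_2^2 b_3 - b_3^2 - 2 b_2^2 + 1$, making sure all of its monomials are evaluated at the \emph{same} index $k$ realizing $\|z\|$, and one must keep straight which avatar of $b_2^2$ enters each place — the genuine square controlled by (\ref{use2}) in the standalone term, versus the bilinear Fekete–Szeg\"o object $\frac12 D^2 G_k(0)(z,\frac{D^2 G(0)(z^2)}{2!})$ produced by the $\lambda = 2$ specialization — so that the three imported estimates assemble into the stated right-hand side before the maximum modulus theorem and the sup-norm are applied. One should also observe that the two-sided hypothesis on $\Phi''(0)$ delimits a nonempty interval only when $\Phi'(0) \ge 1/2$; granting this, the remaining steps mirror the proofs of Theorems \ref{ThmUn1} and \ref{thmB2}.
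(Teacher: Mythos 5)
Your proposal matches the paper's proof essentially step for step: the same decomposition $1 - 2b_2^2 - b_3(b_3-2b_2^2)$ read in the coordinate $k$ realizing $\|z\|$, the $\lambda=0$ and $\lambda=2$ specializations of (\ref{FSUn}) under the two halves of the hypothesis on $\Phi''(0)$, the maximum modulus passage from the distinguished boundary, and sharpness via (\ref{extUn}) at $z=(r,0,\dots,0)$. The one bookkeeping point you flag is resolved in the paper exactly as you anticipate: both occurrences of $b_2^2$ are the bilinear object, which on $\partial_0\mathbb{U}^n$ equals $\bigl(D^2G_k(0)(z_0^2)/2!\bigr)^2$ and is then controlled by (\ref{a2^2}) and the maximum modulus theorem.
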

\begin{proof}  Since $G(z) = z g(z)$, we have
    $$  \frac{1}{2} D^2  G_k (0)  \bigg( z_0, \frac{D^2 G(0) (z_0^2)}{2!}\bigg) \frac{ z_k}{\| z \|}  = \bigg( \frac{D^2 G_k(0) (z_0^2)}{2!} \bigg)^2, \quad k=1,2,\cdots n , $$
     where $z_0 = \frac{z_k}{\| z\|}$ and $k$ satisfies $\vert z_k \vert = \| z \| = \max_{ 1 \leq j \leq n}\{ \vert z_j \vert \}$ (see \cite{XuLiu2}).
    If $z_0 \in \partial \mathbb{U}^n$, then
     $$ \bigg\vert  \frac{1}{2} D^2  G_k (0)  \bigg( z_0, \frac{D^2 G(0) (z_0^2)}{2!}\bigg)  \bigg\vert = \bigg\vert \frac{D^2 G_k(0) (z_0^2)}{2!} \bigg\vert^2. $$
    Consider the function $h_k(\zeta)$ defined in (\ref{hkzeta}). Following the same method as in the proof of Theorem \ref{ThmUn1}, we get (\ref{a2^2}), which together with the above relation yield
     $$ \bigg\vert  \frac{1}{2} D^2  G_k (0)  \bigg( z_0, \frac{D^2 G(0) (z_0^2)}{2!}\bigg)  \bigg\vert \leq \vert \Phi'(0) \vert^2. $$
     Since
      $$ \bigg\vert  \frac{1}{2} D^2  G_k (0)  \bigg( z, \frac{D^2 G(0) (z^2)}{2!}\bigg)  \bigg\vert , \quad k=1,2, \cdots n $$
     are holomorphic functions on $\overline{\mathbb{U}}^n$. By virtue of the maximum modulus theorem of holomorphic functions on the unit polydisc, we obtain
\begin{equation}\label{a2^22}
      \bigg\vert  \frac{1}{2} D^2  G_k (0)  \bigg( z, \frac{D^2 G(0) (z^2)}{2!}\bigg)  \bigg\vert \leq \vert \Phi'(0) \vert^2 \| z\|^4 , \quad z \in \mathbb{U}^n , k= 1,2,\cdots n .
\end{equation}
     Furthermore, since $ 2 \Phi'(0) <  \Phi''(0) + 2 (\Phi'(0))^2$, therefore by (\ref{FSUn}), we have
\begin{align}\label{2a3Un}
    \bigg\vert  \frac{ D^3 G_k(0) (z^3)}{3! }   \bigg\vert  \leq \frac{ \Phi'(0)  \| z\|^3}{2} \bigg( \frac{1}{2} \frac{\Phi''(0)}{\Phi'(0)} +   \Phi'(0) \bigg)  , \quad z \in \mathbb{U}^n ,k = 1,2, \cdots n.
\end{align}
     Again, since $\Phi$ satisfies $2 \Phi'(0)  + \Phi''(0) \leq 6 (\Phi'(0))^2,$ the inequality (\ref{FSUn}) gives
\begin{equation}\label{FS2Un}
    \bigg\vert  \frac{ D^3 G_k(0) (z^3)}{3! }  -   D^2  G_k (0)  \bigg( z, \frac{D^2 G(0) (z^2)}{2!} \bigg) \bigg\vert
    \leq \frac{ \Phi'(0)  \| z\|^3}{2}   \bigg( 3  \Phi'(0) - \frac{1}{2} \frac{\Phi''(0)}{\Phi'(0)}  \bigg)
\end{equation}
    for $z \in \mathbb{U}^n$ and $k =1,2 \cdots n.$
   Now, using (\ref{a2^22}), (\ref{2a3Un}) and (\ref{FS2Un}), we have
\begin{align*}
     \bigg\vert & 1 + D^2  G_k (0)  \bigg( z, \frac{D^2 G(0) (z^2)}{2!}\bigg) \bigg(\frac{ D^3 G_k(0) (z^3)}{3! } \bigg) -  D^2  G_k (0)  \bigg( z, \frac{D^2 G(0) (z^2)}{2!}\bigg)\\
      & - \bigg( \frac{ D^3 G_k(0) (z^3)}{3! } \bigg)^2 \bigg\vert  \\
    & \leq  1 + \bigg\vert \frac{ D^3 G_k(0) (z^3)}{3! } \bigg\vert \bigg\vert \frac{ D^3 G_k(0) (z^3)}{3! } - D^2  G_k (0)  \bigg( z, \frac{D^2 G(0) (z^2)}{2!}\bigg) \bigg\vert \\
    &\;\; \;\;\;\;\; + \bigg\vert D^2  G _k(0)  \bigg( z, \frac{D^2 G(0) (z^2)}{2!}\bigg) \bigg\vert \\
    & \leq  1 +  \frac{( \Phi'(0) )^2 \| z\|^6}{4}   \bigg( 3 \Phi'(0) - \frac{1}{2} \frac{\Phi''(0)}{\Phi'(0)} \bigg)    \bigg( \frac{1}{2} \frac{\Phi''(0)}{\Phi'(0)} +   \Phi'(0) \bigg)  + 2 ( \Phi'(0) )^2 \| z\|^4
\end{align*}
    for $ z\in \mathbb{U}^n$ and $k = 1,2 , \cdots n.$
    Therefore
\begin{align*}
     \bigg\| 1 &+ D^2  G (0)  \bigg( z, \frac{D^2 G(0) (z^2)}{2!}\bigg) \bigg(\frac{ D^3 G(0) (z^3)}{3! } \bigg) -  D^2  G (0)  \bigg( z, \frac{D^2 G(0) (z^2)}{2!}\bigg)\\
      & - \bigg( \frac{ D^3 G(0) (z^3)}{3! } \bigg)^2 \bigg\|  \\
    & \leq   1 +  \frac{( \Phi'(0) )^2 \| z\|^6}{4}   \bigg( 3 \Phi'(0) - \frac{1}{2} \frac{\Phi''(0)}{\Phi'(0)} \bigg)    \bigg( \frac{1}{2} \frac{\Phi''(0)}{\Phi'(0)} +   \Phi'(0) \bigg)  + 2 ( \Phi'(0) )^2 \| z\|^4,
\end{align*}
     which is the required bound.

    Using the same argument as in Theorem \ref{ThmUn1}, the function $G$ provided in (\ref{extUn}) leads to the sharpness of the bound, which completes the proof.
\end{proof}
\begin{remark}
\begin{enumerate}[(i)]
  \item When $\mathbb{B} = \mathbb{U}$ and $X = \mathbb{C}$, Theorem \ref{thmB1} and Theorem \ref{ThmUn1} are equivalent to Theorem \ref{thm1}.
  \item In case of $\mathbb{B} = \mathbb{U}$ and $X = \mathbb{C}$, Theorem \ref{thmB2} and \ref{thmUn2} are equivalent to Theorem \ref{thm2}.
\end{enumerate}
\end{remark}
\section{Applications for Various Subclasses}
    If we take $\Phi(z) =(1+z)/(1-z)$, $\Phi(z) =(1+(1- 2 \alpha)z)/(1-z)$ and $\Phi(z) =((1+z)/(1-z))^\gamma$, Theorem \ref{thmB1}-\ref{thmUn2} gives the following bounds (the branch of the power function is taken such that $((1+z)/(1-z))^\gamma$ =1 at $z =0$).
\begin{corollary}
      Let $g\in \mathcal{H}( \mathbb{B}, \mathbb{C})$ with $g(0)=1$ and $G(z) = z g(z) \in \mathcal{S}^*(\mathbb{B})$. Then the following holds:
      $$ \bigg\vert \bigg( \frac{ l_z (D^2 G(0) (z^2))}{2! \vert\vert z \vert\vert^2} \bigg)^2 - \bigg(\frac{ l_z (D^3 G(0) (z^3))}{3! \vert\vert z \vert\vert^3} \bigg)^2 \bigg\vert \leq 13, \quad z\in \mathbb{B}\setminus\{0\}, l_z \in T_z.$$
    If $\mathbb{B} = \mathbb{U}^n$ and $X = \mathbb{C}^n$, then
   $$  \bigg\| \bigg(  \frac{ D^3 G(0) (z^3)}{3! } \bigg)^2  - \bigg( \frac{D^2 G(0) (z^2)}{2! } \bigg)^2 \bigg\|
      \leq 9 {\| z\|^6}  + 4  \| z \|^4  , \; z \in \mathbb{U}^n.   $$
       All the estimates are sharp.
\end{corollary}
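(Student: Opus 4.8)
The plan is to read this corollary as the specialization of Theorems \ref{thmB1} and \ref{ThmUn1} to the single choice $\Phi(z) = (1+z)/(1-z)$. The first observation is that this $\Phi$ is precisely the one that turns the standing hypothesis of those theorems into the hypothesis stated here: as recorded in the discussion following Definition \ref{defn1}, a mapping $G \in \mathcal{H}(\mathbb{B})$ satisfies $(DG(z))^{-1} G(z) \in \mathcal{M}_\Phi$ for $\Phi(z) = (1+z)/(1-z)$ if and only if $G \in \mathcal{S}^*(\mathbb{B})$. Thus, with this $\Phi$, the assumption $G(z) = zg(z) \in \mathcal{S}^*(\mathbb{B})$ is exactly the membership $(DG(z))^{-1}G(z) \in \mathcal{M}_\Phi$ required in both theorems, and nothing further needs to be checked about the class.

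Next I would extract the Taylor data of $\Phi$. Expanding $\Phi(z) = (1+z)/(1-z) = 1 + 2z + 2z^2 + \cdots$ gives $\Phi(0) = 1$, $\Phi'(0) = 2$ and $\Phi''(0) = 4$; in particular $\Phi'(0) > 0$, $\Phi''(0) \in \mathbb{R}$ and $\RE \Phi > 0$ on $\mathbb{U}$, so $\Phi$ is admissible in the sense of Definition \ref{defn1}. Before invoking the theorems I must confirm their side conditions: here $|\Phi''(0) + 2(\Phi'(0))^2| = |4 + 8| = 12 \geq 4 = 2\Phi'(0)$, so the hypothesis of both Theorem \ref{thmB1} and Theorem \ref{ThmUn1} is satisfied.

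It then remains to substitute these numbers into the two bounds. Since $\tfrac{1}{2}\Phi''(0)/\Phi'(0) + \Phi'(0) = 1 + 2 = 3$ and $(\Phi'(0))^2/4 = 1$, the bound of Theorem \ref{thmB1} becomes $1 \cdot 3^2 + 4 = 13$, which yields the first inequality on $\mathbb{B}$; and the bound of Theorem \ref{ThmUn1} becomes $\tfrac{4\|z\|^6}{4}\cdot 3^2 + (2\|z\|^2)^2 = 9\|z\|^6 + 4\|z\|^4$, which yields the polydisc inequality. Sharpness is inherited directly: the extremal mappings exhibited in the proofs of those theorems (the functions $G(z) = z\exp\int_0^{l_u(z)}(\Phi(it)-1)/t\,dt$ and its polydisc analogue) realize equality for this particular $\Phi$. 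I do not anticipate a genuine obstacle here; the entire content is the correct identification of $\Phi$ with the starlike class and careful bookkeeping of the derivative values $\Phi'(0)=2$, $\Phi''(0)=4$, so the only place an error could creep in is a miscomputed Taylor coefficient of $\Phi$.
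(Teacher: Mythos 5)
Your proposal is correct and is exactly the argument the paper intends: the corollary is obtained by specializing Theorems \ref{thmB1} and \ref{ThmUn1} to $\Phi(z)=(1+z)/(1-z)$, verifying the side condition $|\Phi''(0)+2(\Phi'(0))^2|=12\geq 4=2\Phi'(0)$, and substituting $\Phi'(0)=2$, $\Phi''(0)=4$ to get $13$ and $9\|z\|^6+4\|z\|^4$, with sharpness inherited from the extremal mappings of those theorems. Your arithmetic and the identification of $\mathcal{M}_\Phi$ with $\mathcal{S}^*(\mathbb{B})$ for this $\Phi$ both match the paper.
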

\begin{corollary}
      Let $g\in \mathcal{H}( \mathbb{B}, \mathbb{C})$ with $g(0)=1$ and $G(z) = z g(z) \in \mathcal{S}^*_\alpha(\mathbb{B})$. Then the following holds:
      $$ \bigg\vert \bigg( \frac{ l_z (D^2 G(0) (z^2))}{2! \vert\vert z \vert\vert^2} \bigg)^2 - \bigg(\frac{ l_z (D^3 G(0) (z^3))}{3! \vert\vert z \vert\vert^3} \bigg)^2 \bigg\vert \leq  (1 - \alpha)^2 ( 4 \alpha^2 - 12 \alpha + 13), \;\;  z\in \mathbb{B}\setminus\{0\}.$$
    If $\mathbb{B} = \mathbb{U}^n$ and $X = \mathbb{C}^n$, then
   $$  \bigg\| \bigg(  \frac{ D^3 G(0) (z^3)}{3! } \bigg)^2  - \bigg( \frac{D^2 G(0) (z^2)}{2! } \bigg)^2 \bigg\|
      \leq (1 -\alpha)^2 ( (3- 2 \alpha)^2 \| z \|^6  + 4 \| z \|^4), \; z \in \mathbb{U}^n.   $$
       All the estimates are sharp.
\end{corollary}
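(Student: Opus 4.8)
The plan is to obtain this corollary as a direct specialization of Theorem~\ref{thmB1} (on the ball $\mathbb{B}$) and Theorem~\ref{ThmUn1} (on the polydisc $\mathbb{U}^n$), taking the Janowski-type generating function $\Phi(z) = \frac{1 + (1-2\alpha)z}{1-z}$. As recorded in the discussion following Definition~\ref{defn1}, this is precisely the choice of $\Phi$ for which the membership $(DG(z))^{-1}G(z) \in \mathcal{M}_\Phi$ is equivalent to $G \in \mathcal{S}^*_\alpha(\mathbb{B})$. Hence the hypothesis $G(z) = zg(z) \in \mathcal{S}^*_\alpha(\mathbb{B})$ places us exactly inside the scope of both theorems, and the work reduces to evaluating their bounds for this particular $\Phi$.

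First I would compute the data of $\Phi$ at the origin. A short differentiation gives $\Phi'(z) = \frac{2(1-\alpha)}{(1-z)^2}$ and $\Phi''(z) = \frac{4(1-\alpha)}{(1-z)^3}$, so that $\Phi(0)=1$, $\Phi'(0) = 2(1-\alpha) > 0$ and $\Phi''(0) = 4(1-\alpha) \in \mathbb{R}$; in particular all the normalizations of Definition~\ref{defn1} are met. The next step is to verify the structural hypothesis $\lvert \Phi''(0) + 2(\Phi'(0))^2 \rvert \geq 2\Phi'(0)$ demanded by both theorems. Here $\Phi''(0) + 2(\Phi'(0))^2 = 4(1-\alpha)(3-2\alpha)$ while $2\Phi'(0) = 4(1-\alpha)$, so the inequality reduces to $3 - 2\alpha \geq 1$, which holds for every $\alpha \in (0,1)$; thus no further restriction on $\alpha$ is required.

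It then remains to substitute these values into the two stated bounds. Using $\tfrac12\frac{\Phi''(0)}{\Phi'(0)} + \Phi'(0) = 1 + 2(1-\alpha) = 3 - 2\alpha$ and $(\Phi'(0))^2 = 4(1-\alpha)^2$, the bound of Theorem~\ref{thmB1} collapses to $(1-\alpha)^2(3-2\alpha)^2 + 4(1-\alpha)^2 = (1-\alpha)^2\big((3-2\alpha)^2 + 4\big)$, and the algebraic identity $(3-2\alpha)^2 + 4 = 4\alpha^2 - 12\alpha + 13$ gives the claimed estimate on $\mathbb{B}$. The identical two substitutions in Theorem~\ref{ThmUn1} produce $(1-\alpha)^2(3-2\alpha)^2\|z\|^6 + 4(1-\alpha)^2\|z\|^4$, which is the asserted polydisc bound. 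Sharpness is inherited for free: the extremal mappings $G$ built in~(\ref{extB}) and~(\ref{extUn}) for this $\Phi$ attain equality in Theorems~\ref{thmB1} and~\ref{ThmUn1}, hence in both displayed inequalities.

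I expect no genuine obstacle, as the statement is a pure specialization rather than a new estimate. The only points requiring care are the correct evaluation of $\Phi'(0)$ and $\Phi''(0)$, since an error there propagates into every term, and the one-line verification that $\lvert \Phi''(0) + 2(\Phi'(0))^2 \rvert \geq 2\Phi'(0)$ remains valid across the full range $\alpha \in (0,1)$ so that the theorems apply without a supplementary constraint.
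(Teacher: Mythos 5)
Your proposal is correct and follows exactly the route the paper intends: the corollary is obtained by specializing Theorems~\ref{thmB1} and~\ref{ThmUn1} to $\Phi(z)=(1+(1-2\alpha)z)/(1-z)$, and your computations $\Phi'(0)=2(1-\alpha)$, $\Phi''(0)=4(1-\alpha)$, the verification of $\lvert\Phi''(0)+2(\Phi'(0))^2\rvert\geq 2\Phi'(0)$, and the resulting algebra all check out. The sharpness claim via the extremal mappings of (\ref{extB}) and (\ref{extUn}) is likewise as the paper intends.
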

\begin{corollary}
     Let $g\in \mathcal{H}( \mathbb{B}, \mathbb{C})$ with $g(0)=1$ and $G(z) = z g(z) \in \mathcal{SS}^*_{\gamma}(\mathbb{B})$. Then for $ \gamma \in [1/3,1]$, the following holds:
      $$ \bigg\vert \bigg( \frac{ l_z (D^2 G(0) (z^2))}{2! \vert\vert z \vert\vert^2} \bigg)^2 - \bigg(\frac{ l_z (D^3 G(0) (z^3))}{3! \vert\vert z \vert\vert^3} \bigg)^2 \bigg\vert \leq  9 \gamma^4 + 4 \gamma^2 , \quad  z\in \mathbb{B}\setminus\{0\}, l_z \in T_z.$$
    If $\mathbb{B} = \mathbb{U}^n$ bnd $X = \mathbb{C}^n$, then
   $$  \bigg\| \bigg(  \frac{ D^3 G(0) (z^3)}{3! } \bigg)^2  - \bigg( \frac{D^2 G(0) (z^2)}{2! } \bigg)^2 \bigg\|
      \leq   9 \|z\|^6 \gamma^4  + 4 \|z\|^4 \gamma^2  , \quad z \in \mathbb{U}^n.   $$
       All the estimates are sharp.
\end{corollary}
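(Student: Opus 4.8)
The plan is to obtain this corollary as a direct specialization of Theorem \ref{thmB1} (for the ball $\mathbb{B}$) and Theorem \ref{ThmUn1} (for the polydisc $\mathbb{U}^n$) to the generating function $\Phi(z) = \left((1+z)/(1-z)\right)^\gamma$. By the discussion following Definition \ref{defn1}, the hypothesis $G(z) = z g(z) \in \mathcal{SS}^*_\gamma(\mathbb{B})$ is precisely equivalent to $(DG(z))^{-1}G(z) \in \mathcal{M}_\Phi$ for this choice of $\Phi$; moreover this $\Phi$ is biholomorphic on $\mathbb{U}$ with $\Phi(0)=1$, $\RE \Phi > 0$, $\Phi'(0) > 0$ and $\Phi''(0)\in\mathbb{R}$, so it is admissible in Definition \ref{defn1}. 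Thus the whole task reduces to computing $\Phi'(0)$ and $\Phi''(0)$, checking the admissibility hypotheses of the two theorems, and substituting.

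First I would expand $\Phi$ at the origin. Writing $\log\Phi(z) = \gamma\big(\log(1+z) - \log(1-z)\big) = \gamma\left(2z + \tfrac{2}{3}z^3 + \cdots\right)$ and exponentiating gives $\Phi(z) = 1 + 2\gamma z + 2\gamma^2 z^2 + \cdots$, so that $\Phi'(0) = 2\gamma$ and $\Phi''(0) = 4\gamma^2$. The hypothesis of Theorem \ref{thmB1} (and of Theorem \ref{ThmUn1}), namely $|\Phi''(0) + 2(\Phi'(0))^2| \geq 2\Phi'(0)$, then reads $12\gamma^2 \geq 4\gamma$, i.e.\ $\gamma \geq 1/3$; this is exactly the stated range $\gamma\in[1/3,1]$, which is where the restriction on $\gamma$ comes from.

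Next I would substitute these values into the two bounds. Since $\tfrac{1}{2}\Phi''(0)/\Phi'(0) = \gamma$, we have $\tfrac{1}{2}\Phi''(0)/\Phi'(0) + \Phi'(0) = 3\gamma$ and $(\Phi'(0))^2 = 4\gamma^2$; hence the bound of Theorem \ref{thmB1} collapses to $\tfrac{4\gamma^2}{4}(3\gamma)^2 + 4\gamma^2 = 9\gamma^4 + 4\gamma^2$, and that of Theorem \ref{ThmUn1} to $\tfrac{4\gamma^2\|z\|^6}{4}(3\gamma)^2 + 4\gamma^2\|z\|^4 = 9\gamma^4\|z\|^6 + 4\gamma^2\|z\|^4$, which are exactly the two asserted estimates. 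Sharpness is inherited from the parent theorems: the extremal mappings (\ref{extB}) and (\ref{extUn}) built from this same $\Phi$ already realize equality in Theorems \ref{thmB1} and \ref{ThmUn1}, so they force equality here as well.

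I do not expect a genuine obstacle, since the argument is a substitution into already-proved sharp theorems; the only point requiring care is the power-series computation of $\Phi'(0)$ and $\Phi''(0)$ for the fractional power $\left((1+z)/(1-z)\right)^\gamma$ (using the principal branch normalized by $\Phi(0)=1$), together with the verification that the admissibility inequality forces precisely $\gamma\geq 1/3$.
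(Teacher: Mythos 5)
Your proposal is correct and coincides with the paper's own route: the paper obtains this corollary exactly by substituting $\Phi(z)=((1+z)/(1-z))^\gamma$, with $\Phi'(0)=2\gamma$ and $\Phi''(0)=4\gamma^2$, into Theorems \ref{thmB1} and \ref{ThmUn1}, the admissibility condition $|\Phi''(0)+2(\Phi'(0))^2|\geq 2\Phi'(0)$ reducing to $\gamma\geq 1/3$ and sharpness being inherited from the extremal mappings (\ref{extB}) and (\ref{extUn}). Your series computation and the resulting bounds $9\gamma^4+4\gamma^2$ and $9\gamma^4\|z\|^6+4\gamma^2\|z\|^4$ all check out.
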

\begin{corollary}
      Let $g\in \mathcal{H}( \mathbb{B}, \mathbb{C})$ with $g(0)=1$  and $G(z) = z g(z) \in \mathcal{S}^*(\mathbb{B})$. Then the following holds:
      $$   \vert 2 b_2^2 b_3  - b_3^2 - 2 b_2^2 + 1 \vert \leq 24,$$
      where
\begin{align*}
   b_3 = \frac{ l_z (D^3 G(0) (z^3))}{3! \vert\vert z \vert\vert^3}, \;\; \;\;  b_2 &= \frac{ l_z (D^2 G(0) (z^2))}{2! \vert\vert z \vert\vert^2}, \quad l_z \in T_z.
\end{align*}
       The estimate is sharp.
\end{corollary}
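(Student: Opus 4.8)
The plan is to obtain this estimate as a direct specialization of Theorem \ref{thmB2}. Recall from the discussion following Definition \ref{defn1} that the choice $\Phi(z) = (1+z)/(1-z)$ characterizes the class $\mathcal{S}^*(\mathbb{B})$: the hypothesis $G(z) = z g(z) \in \mathcal{S}^*(\mathbb{B})$ is precisely the statement that $(DG(z))^{-1} G(z) \in \mathcal{M}_\Phi$ for this particular $\Phi$. Thus the first step is simply to fix $\Phi(z) = (1+z)/(1-z)$ and to verify that its derivative data at the origin meet the hypotheses of Theorem \ref{thmB2}, after which the conclusion and the definitions of $b_2, b_3$ transcribe verbatim.

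To that end, I would expand $\Phi(z) = (1+z)/(1-z) = 1 + 2z + 2z^2 + \cdots$, reading off $\Phi'(0) = 2$ and $\Phi''(0) = 4$. Substituting into the admissibility condition $2\Phi'(0) - 2(\Phi'(0))^2 \leq \Phi''(0) \leq 6(\Phi'(0))^2 - 2\Phi'(0)$ gives $-4 \leq 4 \leq 20$, which holds, so Theorem \ref{thmB2} applies with these values and the left-hand side $\vert 2 b_2^2 b_3 - b_3^2 - 2 b_2^2 + 1 \vert$ is exactly the quantity being bounded in the corollary.

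It then remains to evaluate the right-hand side of the estimate in Theorem \ref{thmB2}. With $\Phi'(0) = 2$ and $\frac{\Phi''(0)}{2\Phi'(0)} = 1$, the bound $1 + 2(\Phi'(0))^2 + \frac{1}{4}(\Phi'(0))^2 \left(3\Phi'(0) - \frac{\Phi''(0)}{2\Phi'(0)}\right)\left(\frac{\Phi''(0)}{2\Phi'(0)} + \Phi'(0)\right)$ collapses to $1 + 8 + 1 \cdot 5 \cdot 3 = 24$, which is the claimed value. Finally, sharpness is inherited directly from the extremal mapping $G$ of equation (\ref{extB}) specialized to this $\Phi$, since Theorem \ref{thmB2} already establishes equality for that $G$. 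There is no genuine obstacle beyond the bookkeeping of the substitution; the entire analytic content of the corollary is packaged inside Theorem \ref{thmB2}, and the only point requiring care is checking that the two admissibility inequalities hold so that the theorem may legitimately be invoked.
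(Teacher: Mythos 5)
Your proposal is correct and follows exactly the route the paper intends: the corollary is the specialization of Theorem \ref{thmB2} to $\Phi(z)=(1+z)/(1-z)$, and your computations ($\Phi'(0)=2$, $\Phi''(0)=4$, the admissibility check $-4\leq 4\leq 20$, and the evaluation $1+8+15=24$) are all accurate, with sharpness inherited from the extremal mapping in (\ref{extB}).
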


\begin{corollary}
      Let $g \in \mathcal{H}(\mathbb{B}, \mathbb{C})$ with $g(0)=1$ and $G(z) = z g(z) \in \mathcal{S}^*_\alpha(\mathbb{B})$. Then for $\alpha \in [0,2/3]$, the following holds:
      $$  \vert 2 b_2^2 b_3  - b_3^2 - 2 b_2^2 + 1 \vert \leq   12 \alpha^4 - 52 \alpha^3 + 91 \alpha^2 - 74 \alpha + 24, $$
      where
\begin{align*}
   b_3 = \frac{ l_z (D^3 G(0) (z^3))}{3! \vert\vert z \vert\vert^3} \;\; \text{and} \;\;  b_2 &= \frac{ l_z (D^2 G(0) (z^2))}{2! \vert\vert z \vert\vert^2}, \quad l_z \in T_z.
\end{align*}
       The estimate is sharp.
\end{corollary}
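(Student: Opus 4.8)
The plan is to recognize this corollary as a direct specialization of Theorem \ref{thmB2} to the generating function associated with starlikeness of order $\alpha$. First I would set $\Phi(z) = (1+(1-2\alpha)z)/(1-z)$, since, as noted in the discussion following Definition \ref{defn1}, this is precisely the choice for which $(DG(z))^{-1}G(z) \in \mathcal{M}_\Phi$ is equivalent to $G(z) = zg(z) \in \mathcal{S}^*_\alpha(\mathbb{B})$. Expanding as a power series gives $\Phi(z) = 1 + 2(1-\alpha)z + 2(1-\alpha)z^2 + \cdots$, so that $\Phi'(0) = 2(1-\alpha)$ and $\Phi''(0) = 4(1-\alpha)$. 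These two numbers are the only data that Theorem \ref{thmB2} needs.

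Next I would check that the admissibility hypotheses of Theorem \ref{thmB2} hold on the stated range. With the values above, the lower inequality $2\Phi'(0) - 2(\Phi'(0))^2 \leq \Phi''(0)$ reduces to $-8(1-\alpha)^2 \leq 0$, which is automatic, while the upper inequality $\Phi''(0) \leq 6(\Phi'(0))^2 - 2\Phi'(0)$ reduces (after dividing by the positive quantity $8(1-\alpha)$) to $1 \leq 3(1-\alpha)$, i.e. $\alpha \leq 2/3$. Together with $\alpha \geq 0$ from the class definition, this is exactly why the corollary is restricted to $\alpha \in [0,2/3]$, with $\alpha = 2/3$ sitting on the boundary of the admissible region.

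With admissibility confirmed, I would substitute the two derivative values into the bound furnished by Theorem \ref{thmB2}. The key simplification is that $\Phi''(0)/(2\Phi'(0)) = 1$, which collapses the expression: the factor $3\Phi'(0) - \Phi''(0)/(2\Phi'(0))$ becomes $6(1-\alpha)-1$ and the factor $\Phi''(0)/(2\Phi'(0)) + \Phi'(0)$ becomes $1 + 2(1-\alpha)$, so the bound reads $1 + 8(1-\alpha)^2 + (1-\alpha)^2\big(6(1-\alpha)-1\big)\big(1+2(1-\alpha)\big)$. Writing $\beta = 1-\alpha$ and expanding $\beta^2(12\beta^2 + 4\beta - 1)$ then collecting terms gives $12\beta^4 + 4\beta^3 + 7\beta^2 + 1$; re-expressing in $\alpha$ through the binomial expansions of $\beta^2,\beta^3,\beta^4$ yields the claimed quartic $12\alpha^4 - 52\alpha^3 + 91\alpha^2 - 74\alpha + 24$.

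Finally, sharpness is inherited directly from Theorem \ref{thmB2}, whose extremal mapping $G(z) = z\exp\int_0^{l_u(z)}(\Phi(it)-1)/t\,dt$, evaluated for this particular $\Phi$, already realizes equality; no separate construction is needed. I do not anticipate any genuine obstacle, as the statement is purely an application of the already-proved Theorem \ref{thmB2}. The only care required is the routine but slightly error-prone algebra of expanding the quartic and verifying that the endpoint $\alpha = 2/3$ lands on, rather than outside, the admissible boundary.
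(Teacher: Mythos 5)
Your proposal is correct and follows exactly the route the paper intends: the corollary is obtained by substituting $\Phi(z)=(1+(1-2\alpha)z)/(1-z)$, hence $\Phi'(0)=2(1-\alpha)$ and $\Phi''(0)=4(1-\alpha)$, into Theorem \ref{thmB2}, with the hypothesis $\Phi''(0)\leq 6(\Phi'(0))^2-2\Phi'(0)$ reducing precisely to $\alpha\leq 2/3$ and sharpness inherited from the extremal mapping there. Your algebraic simplification to $12\alpha^4-52\alpha^3+91\alpha^2-74\alpha+24$ checks out.
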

\begin{corollary}
      Let $g \in \mathcal{H}(\mathbb{B}, \mathbb{C})$ with $g(0)=1$ and $G(z) = z g(z) \in \mathcal{SS}^*_\gamma(\mathbb{B})$. Then for $\gamma \in [1/3,1]$, the following holds:
      $$  \vert 2 b_2^2 b_3  - b_3^2 - 2 b_2^2 + 1 \vert \leq   15 \gamma^4 + 8 \gamma^2 +1, $$
      where
\begin{align*}
   b_3 = \frac{ l_z (D^3 G(0) (z^3))}{3! \vert\vert z \vert\vert^3} \;\; \text{and} \;\;  b_2 &= \frac{ l_z (D^2 G(0) (z^2))}{2! \vert\vert z \vert\vert^2}, \quad l_z \in T_z.
\end{align*}
       The estimate is sharp.
\end{corollary}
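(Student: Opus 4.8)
The plan is to obtain this corollary as a direct specialization of Theorem \ref{thmB2} to the subordinating function $\Phi(z) = \bigl((1+z)/(1-z)\bigr)^{\gamma}$, which (as recorded after Definition \ref{defn1}) is precisely the choice that makes $G(z) = z\,g(z) \in \mathcal{SS}^*_\gamma(\mathbb{B})$. The first step is to extract the two Taylor data $\Phi'(0)$ and $\Phi''(0)$ that drive the bound in Theorem \ref{thmB2}. Taking the branch with $\Phi(0)=1$ and writing $\log\Phi(z) = \gamma\bigl(\log(1+z) - \log(1-z)\bigr) = 2\gamma z + \tfrac{2\gamma}{3}z^3 + \cdots$, exponentiation gives $\Phi(z) = 1 + 2\gamma z + 2\gamma^2 z^2 + \cdots$, so that $\Phi'(0) = 2\gamma$ and $\Phi''(0) = 4\gamma^2$.

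Next I would check that the hypothesis $2\Phi'(0) - 2(\Phi'(0))^2 \le \Phi''(0) \le 6(\Phi'(0))^2 - 2\Phi'(0)$ of Theorem \ref{thmB2} is met. Substituting the values above, the left inequality reads $4\gamma - 8\gamma^2 \le 4\gamma^2$, i.e.\ $\gamma \ge 1/3$, while the right inequality reads $4\gamma^2 \le 24\gamma^2 - 4\gamma$, i.e.\ $\gamma \ge 1/5$. Hence both constraints hold exactly on $\gamma \in [1/3,1]$, which is precisely the stated range; the binding constraint is the left one, which explains the appearance of the endpoint $1/3$.

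It then remains to substitute into the right-hand side of Theorem \ref{thmB2}. Using $\frac{\Phi''(0)}{2\Phi'(0)} = \gamma$, one computes $3\Phi'(0) - \frac{\Phi''(0)}{2\Phi'(0)} = 5\gamma$, $\frac{\Phi''(0)}{2\Phi'(0)} + \Phi'(0) = 3\gamma$, $2(\Phi'(0))^2 = 8\gamma^2$, and $\frac{(\Phi'(0))^2}{4} = \gamma^2$, so the bound collapses to $1 + 8\gamma^2 + \gamma^2(5\gamma)(3\gamma) = 15\gamma^4 + 8\gamma^2 + 1$, as claimed. Sharpness is inherited verbatim from Theorem \ref{thmB2}: the extremal mapping $G$ of (\ref{extB}) for this $\Phi$ produces $b_2 = i\Phi'(0)$ and $b_3 = -(\Phi''(0)+2(\Phi'(0))^2)/4$, which force equality throughout the chain. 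I do not anticipate any genuine obstacle here, since the argument is a pure specialization; the only points requiring mild care are fixing the branch so that $\bigl((1+z)/(1-z)\bigr)^{\gamma} = 1$ at $z=0$ (as flagged at the top of this section) and the elementary coefficient computation giving $\Phi'(0)$ and $\Phi''(0)$.
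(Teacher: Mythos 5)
Your proposal is correct and follows exactly the route the paper intends: the corollary is stated as an immediate specialization of Theorem \ref{thmB2} to $\Phi(z)=\bigl((1+z)/(1-z)\bigr)^{\gamma}$, and your computations $\Phi'(0)=2\gamma$, $\Phi''(0)=4\gamma^{2}$, the verification that the hypothesis forces $\gamma\geq 1/3$, and the simplification to $15\gamma^{4}+8\gamma^{2}+1$ all check out. The paper gives no further argument beyond this substitution, so nothing is missing.
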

       When $\mathbb{B} = \mathbb{U}^n$ and $X = \mathbb{C}^n$, we obtain the following bounds:
\begin{corollary}
   Let $g \in \mathcal{H}( \mathbb{U}^n, \mathbb{C})$ with $g(0) =1$ and $G(z) = z g(z) \in \mathcal{S}^*(\mathbb{B})$. Then the following holds:
   $$  \| 2 b_2^2 b_3  - b_3^2 - 2 b_2^2 + 1  \|
      \leq   15 {\| z\|^6}  + 8  \| z \|^4 +1  , \; z \in \mathbb{U}^n,  $$
      where
\begin{align*}
       b_3 = \frac{ D^3 G(0) (z^3)}{3! }   \;\; \text{and} \;\; b_2^2 =  \frac{1}{2} D^2  G (0)  \bigg( z, \frac{D^2 G(0) (z^2)}{2!}\bigg).
\end{align*}
       The estimate is sharp.
\end{corollary}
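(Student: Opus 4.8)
The plan is to derive this corollary as the specialization of Theorem~\ref{thmUn2} to the choice $\Phi(z) = (1+z)/(1-z)$. As recorded immediately after Definition~\ref{defn1}, for $\mathbb{B} = \mathbb{U}^n$ and $X = \mathbb{C}^n$ this $\Phi$ characterizes the starlike class, so the hypothesis $G(z) = z g(z) \in \mathcal{S}^*(\mathbb{B})$ is precisely the condition $(DG(z))^{-1} G(z) \in \mathcal{M}_\Phi$ needed to invoke Theorem~\ref{thmUn2}. Moreover, the quantities $b_2, b_3$ in the statement coincide verbatim with those defined in Theorem~\ref{thmUn2}, so the left-hand side $\| 2 b_2^2 b_3 - b_3^2 - 2 b_2^2 + 1 \|$ is exactly the expression estimated there; only the right-hand side must be evaluated for this particular $\Phi$.

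First I would compute the derivative data of $\Phi$ at the origin. Writing $\Phi(z) = (1+z)(1-z)^{-1}$ gives $\Phi'(z) = 2(1-z)^{-2}$ and $\Phi''(z) = 4(1-z)^{-3}$, whence $\Phi'(0) = 2$ and $\Phi''(0) = 4$; in particular $\tfrac{1}{2}\Phi''(0)/\Phi'(0) = 1$. Next I would verify the admissibility condition required by Theorem~\ref{thmUn2}, namely $2\Phi'(0) - 2(\Phi'(0))^2 \le \Phi''(0) \le 6(\Phi'(0))^2 - 2\Phi'(0)$. With the values above this reads $-4 \le 4 \le 20$, which holds, so the theorem applies.

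Finally I would substitute these values into the right-hand side of Theorem~\ref{thmUn2}. The factor $3\Phi'(0) - \tfrac{1}{2}\Phi''(0)/\Phi'(0)$ becomes $6 - 1 = 5$, the factor $\tfrac{1}{2}\Phi''(0)/\Phi'(0) + \Phi'(0)$ becomes $1 + 2 = 3$, and $(\Phi'(0))^2 = 4$; hence $\frac{(\Phi'(0))^2}{4}\cdot 5 \cdot 3 = 15$ and $2(\Phi'(0))^2 = 8$, producing the asserted bound $15\|z\|^6 + 8\|z\|^4 + 1$.

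Sharpness transfers automatically: the extremal mapping $G(z) = z\exp\int_0^{z_1}(\Phi(it)-1)/t\,dt$ from the proof of Theorem~\ref{thmUn2}, evaluated along $z = (r,0,\dots,0)$, already attains equality in the general estimate and therefore realizes equality in this specialization. I expect no substantial obstacle here, since all the analytic content is carried out in Theorem~\ref{thmUn2}; the only care needed is in the arithmetic of the substitution and in confirming that the two-sided admissibility inequality is genuinely satisfied by this $\Phi$.
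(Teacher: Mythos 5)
Your proposal is correct and follows exactly the route the paper intends: the corollary is obtained by specializing Theorem~\ref{thmUn2} to $\Phi(z)=(1+z)/(1-z)$, and your values $\Phi'(0)=2$, $\Phi''(0)=4$, the verification $-4\le 4\le 20$ of the two-sided condition, and the resulting coefficients $15$ and $8$ all check out. The sharpness transfer via the extremal map of Theorem~\ref{thmUn2} along $z=(r,0,\dots,0)$ is likewise exactly what the paper relies on.
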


\begin{corollary}
     Let $g \in \mathcal{H}( \mathbb{U}^n, \mathbb{C})$ with $g(0) =1$ and $G(z) = z g(z) \in \mathcal{S}^*_\alpha(\mathbb{B})$. Then for $\alpha \in [0,2/3]$, the following holds:
   $$   \| 2 b_2^2 b_3  - b_3^2 - 2 b_2^2 + 1 \| \leq   (1 - \alpha)^2 ( 2 \alpha -3 ) ( 6 \alpha -5) \| z \|^6  + 8 (1 - \alpha)^2  \| z\|^4 + 1, \quad z\in \mathbb{U}^n, $$
   where
\begin{align*}
       b_3 = \frac{ D^3 G(0) (z^3)}{3! }   \;\; \text{and} \;\; b_2^2 =  \frac{1}{2} D^2  G (0)  \bigg( z, \frac{D^2 G(0) (z^2)}{2!}\bigg).
\end{align*}
   The estimate is sharp.
\end{corollary}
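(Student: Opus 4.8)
The plan is to derive this corollary as a direct specialization of Theorem \ref{thmUn2} to the Janowski-type function $\Phi(z) = (1+(1-2\alpha)z)/(1-z)$, which, by the discussion following Definition \ref{defn1}, is exactly the choice characterizing the class $\mathcal{S}^*_\alpha(\mathbb{B})$. First I would record the two data on which the bound in Theorem \ref{thmUn2} depends: a short computation gives $\Phi'(z) = (2-2\alpha)/(1-z)^2$ and $\Phi''(z) = 2(2-2\alpha)/(1-z)^3$, so that $\Phi'(0) = 2(1-\alpha)$ and $\Phi''(0) = 4(1-\alpha)$; in particular $\tfrac{1}{2}\Phi''(0)/\Phi'(0) = 1$.

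Next I would check that the admissibility hypothesis of Theorem \ref{thmUn2}, namely $2\Phi'(0) - 2(\Phi'(0))^2 \leq \Phi''(0) \leq 6(\Phi'(0))^2 - 2\Phi'(0)$, holds precisely on the stated range. Setting $t = 1-\alpha$ so that $\Phi'(0) = 2t$ and $\Phi''(0) = 4t$, the left inequality becomes $4t - 8t^2 \leq 4t$, valid for every $t$, whereas the right inequality becomes $4t \leq 24t^2 - 4t$, that is $3t \geq 1$, equivalently $\alpha \leq 2/3$. This recovers the restriction $\alpha \in [0,2/3]$ in the statement and explains why it is forced: the upper derivative constraint is the binding one. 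This verification is the only place where any thought is required; everything else is mechanical.

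With the hypotheses secured, it remains to substitute $\Phi'(0) = 2(1-\alpha)$ and $\Phi''(0) = 4(1-\alpha)$ into the right-hand side of Theorem \ref{thmUn2}. The $\|z\|^6$ coefficient $\tfrac{(\Phi'(0))^2}{4}\bigl(3\Phi'(0) - \tfrac12\Phi''(0)/\Phi'(0)\bigr)\bigl(\tfrac12\Phi''(0)/\Phi'(0) + \Phi'(0)\bigr)$ simplifies to $t^2(6t-1)(2t+1)$, and on returning to $\alpha$ via $6t-1 = -(6\alpha-5)$ and $2t+1 = -(2\alpha-3)$ the two sign changes cancel to yield $(1-\alpha)^2(2\alpha-3)(6\alpha-5)$; the $\|z\|^4$ coefficient $2(\Phi'(0))^2$ becomes $8(1-\alpha)^2$, and the additive constant is $1$, which together reproduce the claimed inequality. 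Finally, sharpness is inherited from Theorem \ref{thmUn2}: the extremal mapping is the function $G$ of \eqref{extUn} built from this particular $\Phi$, and the coefficient identity established at the end of that proof gives equality for the present choice of $\Phi'(0)$ and $\Phi''(0)$.
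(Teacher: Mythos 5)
Your proposal is correct and is exactly the route the paper intends: the corollary is stated as an immediate specialization of Theorem \ref{thmUn2} to $\Phi(z)=(1+(1-2\alpha)z)/(1-z)$, and your computations of $\Phi'(0)=2(1-\alpha)$, $\Phi''(0)=4(1-\alpha)$, the verification that the two-sided hypothesis reduces to $\alpha\le 2/3$, and the simplification of the $\|z\|^6$ and $\|z\|^4$ coefficients all check out (and agree with Theorem B at $\|z\|=1$). Nothing further is needed.
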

\begin{corollary}
      Let $g \in \mathcal{H}(\mathbb{U}^n, \mathbb{C})$ with $g(0)=1$ and $G(z) = z g(z) \in \mathcal{SS}^*_\gamma(\mathbb{B})$. Then for $\gamma \in [1/3,1]$, the following holds:
      $$  \vert 2 b_2^2 b_3  - b_3^2 - 2 b_2^2 + 1 \vert \leq   15 \gamma^4 \|z\|^6 + 8 \gamma^2  \|z\|^4 +1, $$
      where
\begin{align*}
     b_3 = \frac{ D^3 G(0) (z^3)}{3! }   \;\; \text{and} \;\; b_2^2 =  \frac{1}{2} D^2  G (0)  \bigg( z, \frac{D^2 G(0) (z^2)}{2!}\bigg).
\end{align*}
       The estimate is sharp.
\end{corollary}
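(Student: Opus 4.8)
The plan is to read this off Theorem \ref{thmUn2} by taking the generating function $\Phi(z) = \left(\frac{1+z}{1-z}\right)^{\gamma}$, with the branch normalized so that $\Phi(0)=1$; this is precisely the $\Phi$ attached to $\mathcal{SS}^{*}_{\gamma}$ in the discussion following Definition \ref{defn1}. First I would record that the hypothesis $G(z)=zg(z)\in\mathcal{SS}^{*}_{\gamma}(\mathbb{B})$ is, for this choice of $\Phi$, exactly the statement $(DG(z))^{-1}G(z)\in\mathcal{M}_{\Phi}$, so that we are in the setting of Theorem \ref{thmUn2} with $b_{2},b_{3}$ as defined there.

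Next I would compute the two Taylor data $\Phi'(0)$ and $\Phi''(0)$. From $\log\Phi(z)=\gamma\log\frac{1+z}{1-z}=2\gamma\left(z+\frac{z^{3}}{3}+\cdots\right)$ and exponentiating, one gets $\Phi(z)=1+2\gamma z+2\gamma^{2}z^{2}+\cdots$, whence $\Phi'(0)=2\gamma$, $\Phi''(0)=4\gamma^{2}$, and in particular $\tfrac12\Phi''(0)/\Phi'(0)=\gamma$. These values also confirm the standing requirements $\Phi'(0)>0$ and $\Phi''(0)\in\mathbb{R}$ of Definition \ref{defn1}.

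I would then verify the two-sided hypothesis $2\Phi'(0)-2(\Phi'(0))^{2}\le\Phi''(0)\le 6(\Phi'(0))^{2}-2\Phi'(0)$ of Theorem \ref{thmUn2}. Substituting the above values, the left inequality reduces to $4\gamma-8\gamma^{2}\le 4\gamma^{2}$, i.e. $\gamma\ge \tfrac13$, and the right inequality to $4\gamma^{2}\le 24\gamma^{2}-4\gamma$, i.e. $\gamma\ge\tfrac15$; the binding constraint is $\gamma\ge\tfrac13$, which is exactly the stated range $\gamma\in[1/3,1]$. Finally I would substitute $\Phi'(0)=2\gamma$ and $\tfrac12\Phi''(0)/\Phi'(0)=\gamma$ into the conclusion of Theorem \ref{thmUn2}: the two factors become $3\Phi'(0)-\gamma=5\gamma$ and $\gamma+\Phi'(0)=3\gamma$, so the $\|z\|^{6}$ term is $\tfrac{(2\gamma)^{2}}{4}(5\gamma)(3\gamma)\|z\|^{6}=15\gamma^{4}\|z\|^{6}$, the $\|z\|^{4}$ term is $2(2\gamma)^{2}\|z\|^{4}=8\gamma^{2}\|z\|^{4}$, and the constant is $1$, giving the asserted bound. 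Sharpness is inherited directly from Theorem \ref{thmUn2} via the extremal mapping \eqref{extUn} for this $\Phi$, evaluated at $z=(r,0,\dots,0)$.

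Since the heavy lifting is already done in Theorem \ref{thmUn2}, the only real point requiring attention — the main (and rather mild) obstacle — is the bookkeeping around $\Phi$: fixing the holomorphic branch of $\left(\frac{1+z}{1-z}\right)^{\gamma}$ so that it meets all the normalizations of Definition \ref{defn1}, correctly extracting $\Phi'(0)$ and $\Phi''(0)$ from the series, and pinning down the precise range of $\gamma$ for which the two-sided derivative condition holds.
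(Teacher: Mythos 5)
Your proposal is correct and follows exactly the route the paper intends: the corollary is obtained by specializing Theorem \ref{thmUn2} to $\Phi(z)=\left(\frac{1+z}{1-z}\right)^{\gamma}$, and your computations $\Phi'(0)=2\gamma$, $\Phi''(0)=4\gamma^{2}$, the reduction of the two-sided hypothesis to $\gamma\ge\frac13$, and the substitution yielding $15\gamma^{4}\|z\|^{6}+8\gamma^{2}\|z\|^{4}+1$ are all accurate. The paper itself gives no more detail than "Theorem \ref{thmB1}--\ref{thmUn2} gives the following bounds," so your write-up simply makes the same verification explicit.
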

\section*{Declarations}
\subsection*{Funding}
The work of Surya Giri is supported by University Grant Commission, New Delhi, India  under UGC-Ref. No. 1112/(CSIR-UGC NET JUNE 2019).
\subsection*{Conflict of interest}
	The authors declare that they have no conflict of interest.
\subsection*{Author Contribution}
    Each author contributed equally to the research and preparation of the manuscript.
\subsection*{Data Availability} Not Applicable.
\noindent

\end{document}